\numberwithin{equation}{section}
\newcommand{\qed}{\hfill$\blacksquare$}
\newcommand{\R}{\mathbb{R}} 
\newcommand{\Z}{\mathbb{Z}}
\newtheorem{thm}{Theorem}[section]
\newtheorem{lem}[thm]{Lemma}
\newtheorem{prop}[thm]{Proposition}
\newtheorem{define}[thm]{Definition}
\newtheorem{notation}[thm]{Notation}
\newtheorem{example}[thm]{Example}
\newtheorem{remark}[thm]{Remark}
\renewcommand{\b}{\beta}
\newcommand{\g}{\gamma}
\newcommand{\e}{\epsilon}
\newcommand{\av}[1]{\left|{#1}\right|}
\newcommand{\norm}[1]{\left\|{#1}\right\|}
\renewcommand{\l}{\left}
\renewcommand{\r}{\right}
\newcommand{\be}{\begin{enumerate}}
\newcommand{\bi}{\begin{itemize}}
\newcommand{\ee}{\end{enumerate}}
\newcommand{\ei}{\end{itemize}}
\newcommand{\G}{{\Gamma}}
\renewcommand{\phi}{\varphi}
\newcommand{\nm}{{n}_{{{{-}}}}}
\newcommand{\no}{{n}_{{{{0}}}}}
\newcommand{\np}{{n}_{{{{+}}}}}
\renewcommand{\L}{\mathcal{L}}
\newcommand{\M}{\mathcal{M}}
\newcommand{\ST}{{\mathcal{ST}}}
\newcommand{\mc}[1]{\mathcal{{#1}}}
\title{The spectral index of signed Laplacians and their structural
  stability}  \author{Jared C. Bronski, Lee DeVille,  Paulina
  Koutsaki\footnote{Department of Mathematics, University of Illinois,
    1409 W. Green St, Urbana IL 61801 USA}}
\begin{document}

\maketitle

\begin{abstract}
  Given a graph Laplacian with positively and negatively weighted
  edges we are interested in characterizing the set of weights that
  give a particular spectral index, i.e.~give a prescribed number of
  positive, zero, and negative eigenvalues.  One of the main results
  of this paper is that the set of signed Laplacians that exhibit
  multiple zero eigenvalues is ``small'', and that eigenvalue
  crossings are nongeneric --- specifically, eigenvalues repel each
  other near zero in a sense that can be made precise.  We exhibit an
  algebraic discriminant that measures the level of repulsion, and
  show that this discriminant admits a combinatorial interpretation.
  Conversely, we exhibit a constructive method for finding the sets of
  Laplacians that exhibit a large degree of degeneracy (many
  eigenvalues at or near zero) in terms of these discriminants.

{\bf Keywords:} Spectral graph theory, dynamics on networks, graph
Laplacian, social networks

{\bf MSC2010:} 34D06, 34D20, 37G35, 05C31

\end{abstract}

\section{Introduction}

Let $\G = \{\gamma_{ij}\}$ be a symmetric weighted graph, let $\L(\G)$
be its graph Laplacian, and let $n_+(\G)$ be the number of positive
eigenvalues of this Laplacian.  It is a classical result that if
$\gamma_{ij} >0$, then $n_+(\G) = 0$.  In~\cite{Bronski.DeVille.14},
the authors computed bounds for $n_+(\G)$ when $\gamma_{ij}$ are
allowed to be both positive and negative.  We showed that the sign
topology of the graph (i.e., only knowing which edges were positive
and which were negative) determines strict upper and lower bounds on
$n_+(\G)$, and, moreover, that these bounds are saturated: there is
some choice of weights that achieves these bounds.

There are graphs for which $n_+(\G)$ is independent of the magnitude
of all of the weights (these graphs were termed ``rigid''
in~\cite{Bronski.DeVille.14}); however, for `` most'' graphs,
$n_+(\G)$ depends on the magnitudes of the weights attached to each
edge.  A natural question is then, for any particular sign topology,
to describe the set of weights which give a prescribed number of
positive eigenvalues.  In this paper, we study this question in its
most general formulation, describing these sets for any particular
sign topology, as well as considering questions of genericity.

We also consider applications of this question to designing networks
with particular dynamical properties, in particular those that can
differentiate multiple signals, and those that are stable but can
support multiple solutions on long timescales.

\subsection{Results of paper}

If $\G = \{\gamma_{ij}\}$ be a weighted symmetric graph, then we
define $\L(\G)$ to be its graph Laplacian, i.e. $\L(\G)$ is the matrix
whose entries are
\begin{equation}\label{eq:Laplacian}
  \L(\G)_{ij} = \begin{cases} \gamma_{ij},& i\neq j,\\ -\sum_{k\neq i}\gamma_{ik},& i=j.\end{cases}
\end{equation}
We define $(n_-(\G), n_0(\G), n_+(\G))$ to be the number of negative,
zero, and positive eigenvalues of $\L(\G)$.

Let $G = (V,E)$ be a symmetric unweighted graph, where all of the
edges are colored black or red.  We term this colored structure the
``topology'' of the graph $G$.  We also define the two subgraphs $G_+$
(resp.~$G_-$) to be the subgraphs where we consider only black
(resp.~red) edges.  We define $c(\cdot)$ of any graph to be the number
of its connected components.

Let us now choose the convention that black edges will correspond to
positive weights, and red edges to negative weights\footnote{We are
  stealing this convention from the accounting industry.}.  It is
clear that, given a weighted graph $\G$, there is a unique colored
graph $G$ associated with it --- we simply forget the magnitude of the
weights and retain only their signs.  Conversely, for any signed graph
$G$, there is a natural map from the positive orthant to the set of
weighted graphs $\G$.

We then show a number of results about the index of all weighted
graphs with a given topology.  We first state the following results,
which are a generalization and slight restatement of the authors'
previous results in~\cite{Bronski.DeVille.14}:

\begin{enumerate}

\item If we fix the positive weights of the graph and choose the
  negative weights in a neighborhood of zero, then generically the
  Laplacian graph has $c(G_+)-1$ positive eigenvalues, a single zero
  eigenvalue, and the rest positive.  In particular, if $G_+$ is
  connected, then for negative weights sufficiently small, the
  Laplacian is negative semidefinite.

\item If we fix the positive weights of the graph and choose the
  negative weights in a neighborhood of $\infty$, then the Laplacian
  graph has $n - c(G_-)$ positive eigenvalues, a single zero
  eigenvalue, and the remainder negative.  In particular, if there are
  any red edges, then we can destabilize the matrix by choosing their
  weights large enough.
\end{enumerate}

If we consider a one-parameter family of Laplacians with fixed black
weights and move the red weights from zero to infinity along some ray,
then it follows from the above that $\tau:= n - c(G_+) - c(G_-) + 1$
eigenvalues move through zero from left to right.  The next question
is: does this one-parameter family have $\tau$ distinct crossings of
individual eigenvalues, or do multiple eigenvalues cross at the same
time?  (Note here that the Laplacian will always have an eigenvalue
fixed at $0$, so a simple crossing corresponds to an eigenvalue with
multiplicity exactly two.)

To state the answer precisely, we need some notation.  Let $G$ be a
fixed graph with $B$ black edges and $R$ red edges, and denote the
weights on these edges by $w_+\in \R^B$ and $w_-\in \R^R$.  This
induces a map from $\R^B\times \R^R$ to all weighted graphs with a
given topology, by setting the positive weights to be the values of
$w_+$ and the negative weights to be $-w_-$.  If we restrict $w_+,
w_-$ to the positive orthant, then there will be positive weights on
the black edges and negative weights on the red edges.  The mapping
from $\R^B\times \R^R$ to weighted graphs induces a topology on
weighted graphs, and it is in this sense that we use the term generic
below.

The one-parameter family of graphs described above corresponds to
fixed $w_+$ and the ray $tw_-$, $t\in[0,\infty)$ --- this gives a
one-parameter family of graphs, which gives a one-parameter family of
sets of eigenvalues, indexed by $t$.

The main results of this paper are as follows.  We will only state and
prove the results in the case where $\G$ is connected (if not, the
Laplacian is the direct sum of the connected copies, so these results
generalize in a straightforward manner).

\begin{enumerate}

\item Fix any $w_+$ for the positive weights.  For a generic set of
  $w_-$, all of the eigenvalues have distinct crossings, i.e. there
  are $\tau$ distinct values of $t$ at which the Laplacian has a
  single eigenvalue crossing zero.

\item There exists a generic set of $w_+$ such that for any $w_-$, all
  of the eigenvalues have distinct crossings.  Moreover, for any fixed
  $w_+$, if we consider the set of all $w_-$ with $\norm{w_-} = 1$,
  then there is a ``minimum distance'' in $t$ between successive
  eigenvalue crossings.  This shows the phenomenon of level repulsion
  for generic Laplacians.

\item To obtain multiple eigenvalues crossing simultaneously, then
  from the previous two statements, one must be ``doubly nongeneric''
  in a certain sense.  We introduce an algebraic expression, related
  to the discriminant of a polynomial, that determines when this
  nongeneric situation can occur. This discriminant can be expressed
  in terms of the homology of the graph, and corresponds to a signed
  count of certain spanning 2-forests in the graph.

\item Finally, we give sufficient $\ell^1$ conditions on the vector
  $w_-$ that guarantee stability of the Laplacian, and moreover
  characterize these conditions in a combinatorial manner.

\end{enumerate}

There is an obvious duality in the statements above, since the
spectrum of $-L$ is just the spectrum of $L$ times $-1$, so we could
also choose to fix the negative weights $w_-$ and vary $w_+$ and
obtain obvious analogues of the above results.

\subsection{Applications}

We present two applications motivating these questions:

\noindent 1] Consider the problem of designing a linear network to
differentiate multiple signals, where the network structure is
prescribed. More specifically, consider the system
\begin{equation}\label{eq:forcedlinear}
  \frac{dx }{dt} = {A}  x +  f(t), \quad  x\in \R^n, \quad f\colon \R\to\R^n
\end{equation}
where $f(t)$ represents an external signal, the matrix ${A}$
represents the network connectivity, and the vector $x(t)$ represents
the network response. In order for the system to be stable we require
that the spectrum of the matrix ${A}$ lie in the left half-plane. It
is clear that eigenvectors corresponding to eigenvalues far from zero
are difficult to excite, so the main response from such a system is
from the eigenvalues near zero. For such modes the system above acts
as an integrator, integrating the projection of the external signal
onto the eigenmodes.  Being able to recognize multiple signals is the
same as saying that we would like to choose ${A}$ to have a large
dimensional kernel --- one dimension for each signal we would like to
be able to recognize --- or, perhaps, to choose $A$ so that it has
many eigenvalues near zero so that the responses track the signal with
a slow decay.  

Of course, it is not difficult to design a linear system with spectrum
wherever we would like: simply choose the eigenvalues, then any matrix
similar to the corresponding diagonal matrix would do.  However,
notice that in general this gives a dense matrix, and naively it is
not clear how one can choose the eigenstructure so that the eventual
linear system is compatible with a desired topological structure.
However, using the results of this paper, we show how this can be
accomplished: if we can design a network with a $k$-fold degeneracy at
zero, then an open set of perturbations of the weights of such a
system will give the desired network.

\noindent 2] Given a graph $G = (V,E)$ and symmetric coupling
functions $\varphi_{ij}(\cdot) = \varphi_{ji}(\cdot)$, define
\begin{equation}\label{eq:network}
  \frac{d}{dt} x_i = F_i({\mathbf x}) :=  \omega_i + \sum_{(i,j)\in E}\varphi_{ij}(x_j-x_i).
\end{equation}
One famous case of this model is the Kuramoto oscillator
network~\cite{K, Kuramoto.91, S, Acebron.etal.05}, where
$\varphi_{ij}(\cdot) = \gamma_{ij} \sin(\cdot)$.  Assume ${\mathbf
  x}^*$ is a fixed point for~\eqref{eq:network}, i.e. $F_i(\mathbf
x^*) = 0$ for all $i$.  The stability of this point is determined the
index of the Jacobian $J$, where
\begin{equation*}
  J_{ij} = \begin{cases} \varphi_{ij}'(x^*_j-x^*_i),& i\neq j,\\
    -\sum_{k} \varphi_{ik}'(x^*_k-x^*_i), & i = j.\end{cases}
\end{equation*}
In the Kuramoto case, the off-diagonal terms are given by
$\gamma_{ij}\cos(x_j^*-x_i^*)$.  The Jacobian $J$ is a graph Laplacian
of the form~\eqref{eq:Laplacian}; thus, determining the stability
indices for fixed points of~\eqref{eq:network} is related to the
problem studied here~\cite{MS2, Mirollo.Strogatz.2005, MS1,VO1,
  Bronski.DeVille.Park.2012}.  Of course, identifying those fixed
points whose Jacobian is negative semi-definite gives the attracting
fixed points for the system, but in fact being able to determine which
of these points have one unstable eigenvalue is important to
understand metastable transitions for stochastic versions of this
system~\cite{NL-DV-2012}.

If the components of $x^*$ are close enough to each other, then
$\cos(x_j^*-x_i^*) >0$, and the Jacobian is negative semidefinite by
the classical theory (see Theorem 3.1
of~\cite{Ermentrout.92}). However, one might ask about the stability
of ``splay states'', i.e. those stationary points where some of the
components are far enough from each other to make the $\cos(\cdot)$
term negative. For a generic choice of $\omega_i$,
$\varphi'_{ij}(x^*_i-x^*_j)$ is non-zero for all $(i,j)\in E$,
implying that the graph determining $J$ and the graph defined by the
original interactions in~\eqref{eq:network} have the same underlying
topology. Thus we have a fixed network topology, and want to
understand the effect of some edge weights being negative.  The
boundary of the region where the matrix is negative-semidefinite with
a one-dimensional kernel is, of course, the set of points where the
matrix is negative semi-definite with a higher dimensional
kernel~\cite{Bronski.DeVille.Park.2012}, and this provides yet another
motivation for studying this problem.

\section{Statement of main results}

In this section, we present the main results of this paper, leaving
the proofs for later sections.  Many of the definitions in this
section are identical to those of~\cite{Bronski.DeVille.14}, but we
include them here for completeness.

\subsection{Weighted graphs, signed graphs, and the Laplacian}
\begin{define}
\begin{itemize}

\item   A {\bf graph} $G = (V,E)$ is a set $V$ of vertices and a set
  $E\subseteq V\times V$ of edges.  

\item A {\bf signed graph} is the triple $G=(V,E,\sigma)$ where
  $(V,E)$ is a standard graph, and with a map $\sigma\colon
  E\to\{\mbox{red},\mbox{black}\}$.

\item A {\bf weighted graph} is the pair $\G = (V,\{\gamma_{ij}\})$
  where $\gamma_{ij}\in\R$.  The {\bf edges} of $\G$ are those $(i,j)$
  with $\gamma_{ij}\neq 0$, and we say that $\gamma_{ij}$ is the {\bf
    weight} of edge $i\leftrightarrow j$. 
\end{itemize}
\end{define}

Any weighted graph corresponds to a signed graph in an obvious manner;
we will colloquially call this the ``topology'' of the weighted graph.

\begin{define}
  Given a weighted graph $\G$, the {\bf Laplacian} of $\G$ is the
  matrix $\L(\G)$ with
\begin{equation}\label{eq:defofL}
  \L_{ij} = \begin{cases} \quad \gamma_{ij},&i\neq j,\\ -\sum_{k\neq i}\gamma_{ik},& i=j.\end{cases}
\end{equation}
The {\bf index} of (the Laplacian of) $\G$ is the triple of integers
\begin{equation}\label{eq:defofindex}
  (\nm(\G), \no(\G), \np(\G))
\end{equation}
giving the number of negative, zero, and positive eigenvalues of
$\L(\G)$.
\end{define}

\begin{notation}
  If $G$ is a signed graph, we will denote by $G_+$ the subgraph
  containing only the plus edges, and $G_-$ the subgraph containing
  only the minus edges, with a similar convention for $\Gamma_\pm$
  when considering weighted graphs.  We also denote $c(G)$ as the
  number of connected components of a graph, so that $c(G_+)$ is the
  number of connected components of a signed graph when we only
  consider positive edges, etc.  We only consider connected graphs in
  this paper, so that $c(G) = 1$, but we allow for $c(G_+)$ and
  $c(G_-)$ to be larger than one.  If $\Gamma$ is a weighted graph,
  then we can associate it to a signed graph in the obvious way, and
  thus all of the notions above make sense as well, i.e $\Gamma_+$,
  $c(\Gamma_+)$, etc.
\end{notation}

\subsection{Crossing polynomial and spectral variety}

\begin{define}\label{def:M}
If ${T}$ is a tree, define $\pi({T})$ to be the product over the edge
weights in the tree
\begin{equation}\label{eq:defofpi}
  \pi({T}) := \prod_{i<j,(i,j)\in E(T)} \gamma_{ij}.
\end{equation}
Let $\G$ be a weighted graph with $\av{V(\G)}=N$, and $\L(\G)$ be its
graph Laplacian.  We know that $\L(\G)$ has a zero eigenvalue, and
therefore $\det(\L(\G))= 0$.  Order the $n$ eigenvalues of $\L(\G)$
so that $\lambda_1=0$, then define
\begin{equation}\label{eq:defofM}
  \M(\G) = \frac{(-1)^{N-1}}N  \prod_{i=2}^N \lambda_i.
\end{equation}
Thus $\M(\G)$ is proportional to the linear term in the
characteristic polynomial of the Laplacian.  Also, $\M(\G) \neq 0$
iff $0$ is a simple eigenvalue of $\L(\G)$.
\end{define}

With this notation the Kirchhoff matrix tree theorem can be stated as
follows:

\begin{lem}[Weighted Matrix Tree Theorem]\label{lem:mtt}
  Let $\G$ be a connected, weighted graph, and $\ST(\G)$ the
  set of all spanning trees of $\G$.  Then
  \begin{equation}\label{eq:MTT}
    \M(\G) = \sum_{T\in\ST(\G)} \pi(T).
  \end{equation}
\end{lem}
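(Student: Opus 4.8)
The plan is to reduce the claim about the eigenvalue product defining $\M(\G)$ to the evaluation of a single principal cofactor of $\L(\G)$, and then to compute that cofactor combinatorially through the incidence matrix and the Cauchy--Binet formula. The starting point is that $\L(\G)\1 = 0$ and, by symmetry, $\1^{T}\L(\G) = 0$, which forces $\lambda_1 = 0$ and is exactly why $\M(\G)$ is (a normalization of) the product of the surviving eigenvalues.

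First I would read off the coefficient of $\lambda^1$ in the characteristic polynomial $\chi(\lambda) = \det(\lambda I - \L(\G)) = \lambda\prod_{i=2}^{N}(\lambda - \lambda_i)$ in two different ways. Expanding the product shows this coefficient equals $(-1)^{N-1}\prod_{i=2}^{N}\lambda_i$, which is precisely $N\,\M(\G)$. On the other hand, the coefficient of $\lambda^1$ in $\det(\lambda I - A)$ is always $(-1)^{N-1}$ times the sum of the $(N-1)\times(N-1)$ principal minors of $A$; for $A = \L(\G)$ this is $(-1)^{N-1}\sum_{i=1}^{N}\det\big(\L(\G)[\hat i]\big)$, where $\L(\G)[\hat i]$ is the reduced Laplacian with row and column $i$ deleted. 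Because the row and column sums of $\L(\G)$ vanish, all of these principal cofactors are equal, so the sum is $N$ times any one of them. Equating the two expressions and cancelling the factor $N$ gives $\M(\G) = (-1)^{N-1}\det\big(\L(\G)[\hat i]\big)$ for each fixed $i$.

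The remaining, and most combinatorial, step is to show $\det\big(\L(\G)[\hat i]\big) = (-1)^{N-1}\sum_{T\in\ST(\G)}\pi(T)$. Here I would orient each edge arbitrarily, form the signed vertex--edge incidence matrix $Q$ together with the diagonal weight matrix $W = \operatorname{diag}(\gamma_{e})$, and observe that in the present sign convention $\L(\G) = -QWQ^{T}$. Deleting the $i$-th row of $Q$ yields a reduced incidence matrix $Q_0$ with $\L(\G)[\hat i] = -Q_0 W Q_0^{T}$, so $\det\big(\L(\G)[\hat i]\big) = (-1)^{N-1}\det(Q_0 W Q_0^{T})$. The generalized Cauchy--Binet formula expands the last determinant as $\sum_{S}\det(Q_0[\,\cdot\,,S])^{2}\prod_{e\in S}\gamma_{e}$, the sum running over edge subsets $S$ with $|S| = N-1$. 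The classical unimodularity of the incidence matrix then gives $\det(Q_0[\,\cdot\,,S]) = \pm 1$ when $S$ is a spanning tree and $0$ otherwise, so each spanning tree contributes exactly $\pi(T)$ and $\det(Q_0 W Q_0^{T}) = \sum_{T}\pi(T)$. Feeding this back, the two factors $(-1)^{N-1}$ cancel and $\M(\G) = \sum_{T\in\ST(\G)}\pi(T)$.

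The step I expect to demand the most care is the unimodularity input --- that a maximal square submatrix of the reduced incidence matrix has determinant $\pm 1$ precisely on spanning trees --- together with the sign bookkeeping. The only real departure from the textbook positive-weight matrix tree theorem is that the weights $\gamma_e$ may here be negative, so I must use the form of Cauchy--Binet with the indefinite diagonal $W$ sandwiched in the middle rather than the factorization $W = W^{1/2}(W^{1/2})^{T}$. Since each tree enters through the square $\det(Q_0[\,\cdot\,,S])^{2}$, the signs of the weights never interfere, and every spanning tree contributes its signed product $\pi(T)$ no matter the signs of its edge weights; the two stray factors of $(-1)^{N-1}$ then cancel cleanly to produce the stated identity.
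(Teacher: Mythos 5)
Your argument is correct, and it is worth noting at the outset that the paper does not actually prove this lemma at all --- it simply cites Theorem~VI.29 of Tutte's book --- so any complete proof is necessarily a departure from what appears in the text. What you supply is the classical Cauchy--Binet proof of the weighted matrix tree theorem, and every step checks out under the paper's conventions: the coefficient-of-$\lambda$ computation correctly identifies $N\,\M(\G)$ with $(-1)^{N-1}E_{N-1}(\L(\G))$; the equality of all principal cofactors follows from the vanishing row and column sums (via the adjugate identity, which also covers the degenerate case where $\L(\G)$ has rank below $N-1$ and all cofactors vanish); and the factorization $\L(\G)=-QWQ^{T}$ correctly absorbs the paper's sign convention $\L_{ii}=-\sum_{k\neq i}\gamma_{ik}$, with the two stray factors of $(-1)^{N-1}$ cancelling as you say. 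Your closing remark is also the right one to emphasize: the only subtlety beyond the textbook positive-weight case is that $W$ may be indefinite, so one must apply Cauchy--Binet with $W$ sandwiched between $Q_0$ and $Q_0^{T}$ rather than splitting off a square root, and the total unimodularity of the reduced incidence matrix then makes each spanning tree contribute $\det\bigl(Q_0[\,\cdot\,,T]\bigr)^{2}\pi(T)=\pi(T)$ regardless of the signs of its edge weights. This self-contained route buys the reader something the paper's citation does not: it makes transparent exactly why the identity survives the passage to signed weights, which is the whole point of its use in this paper.
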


\begin{remark}
  This is Theorem VI.29 in the text of Tutte~\cite{Tutte}: a proof is
  provided there.  Notice that if all of the edge weights are
  non-negative, then the sum in~\eqref{eq:MTT} is a sum of positive
  terms. This is an alternate proof that the kernel of a graph
  Laplacian with positive weights is negative semidefinite, and has a
  simple kernel, when the graph is connected.  However, once we allow
  negative weights, the sum on the right-hand side can have
  cancellations and will not be sign-definite.
\end{remark}

We will denote the vector $w = (w_+,w_-) \in (\R)^{B}\times (\R)^{R}$
by
\begin{equation*}
  (s_1,s_2,\dots, s_B, t_1, t_2, \dots, t_R).
\end{equation*}
The $s_k$ are the weights of the black edges, and the weights on the
red edges are $-t_k$.

Every spanning tree will have $N-1$ edges.  Choose a spanning tree $T$
and for some $k$, it will have $k$ red edges and $N-k-1$ black edges.
The contribution to the crossing polynomial is a term of the form
\begin{equation}\label{eq:stpi}
  \pi(T) =  (-1)^k s_{i_1}s_{i_2}\dots s_{i_{N-k-1}}t_1t_2\dots t_k.
\end{equation}
If we denote $\mathcal{ST}_k$ as the set of spanning trees with
exactly $k$ red edges, then
\begin{equation}\label{eq:stpoly}
  \M(\G) = \sum_{T\in \ST(\G)}\pi(T) = \sum_{k=0}^{N-1}\sum_{T\in \ST_k(\G)}(-1)^k s_{i_1}s_{i_2}\dots s_{i_{N-k-1}}t_1t_2\dots t_k.
\end{equation}
We see from this that $\M(\G)$ is a homogeneous polynomial of degree
$N-1$ with alternating signs.  

For the remainder of this paper, we will think of the $s_k$'s as {\em
  parameters} and the $t_k$'s as {\em variables}.  Then $\M(\G)$ is a
polynomial in the variables $(t_1,\dots, t_R)$, where the coefficients
are set parametrically by combinations of the $s_k$'s.  This
corresponds to fixing the black edges with specific weights, and
varying the red edges.

\begin{prop}
  The polynomial $\M(\G)$ has terms of degree $k$ iff
\begin{equation}\label{eq:degreebounds}
  c(G_+)-1 \le k \le  N-c(G_-).
\end{equation}
\end{prop}

\begin{proof}
  The proof is the same as the proof of Lemma 2.14
  from~\cite{Bronski.DeVille.14}, which we repeat here.  Every
  spanning tree must have at least $c(G_+)-1$ red edges to connect the
  components of $G_+$ together.  Conversely, it must have at least
  $c(G_-)-1$ black edges for the same reason.  Since every tree has
  $N-1$ edges, it has at least $c(G_+)-1$ and at most $N-c(G_-)$ red
  edges.  This gives the upper and lower bounds.

  To see that the polynomial also contains all of the intermediate
  terms, we can argue in two ways.  One argument is to use
  contraction-deletion repeatedly (see Remark~2.17
  of~\cite{Bronski.DeVille.14}) to see that the coefficients satisfy
  log-convexity; this implies that all of the intermediate terms
  appear in the polynomial as well.  A more concrete argument: we can
  note that for any number $k$ with $c(G_+)-1 \le k \le N-c(G_-)$,
  there is a subforest of $\G_-$ with exactly $k$ edges.  Since $\G_+$
  is connected, this means that we can extend this subforest to a
  spanning tree of $\G$ using only edges from $\G_+$, and thus there
  is a spanning tree of $\G$ with exactly $k$ edges, giving the term
  of degree $k$ in the polynomial.
\end{proof}

\begin{prop}\label{prop:nondec}
  All eigenvalues are nondecreasing as a function of any $t_k$.
\end{prop}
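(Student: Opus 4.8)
The plan is to reduce the statement to a standard monotonicity fact about symmetric matrices under positive semidefinite perturbations. First I would fix an index $k$ and identify the red edge $(i,j)$ whose weight is $\gamma_{ij} = -t_k$. Reading off the dependence of the Laplacian on this single variable from the definition~\eqref{eq:defofL}, the off-diagonal entries satisfy $\L_{ij} = \L_{ji} = -t_k$, while each of the diagonal entries $\L_{ii}$ and $\L_{jj}$ picks up a contribution $-\gamma_{ij} = +t_k$; every other entry is independent of $t_k$. Consequently
$$
  \pd{\L}{t_k} = (e_i - e_j)(e_i - e_j)^{\top},
$$
a rank-one positive semidefinite matrix, where $e_i$ and $e_j$ denote the standard basis vectors.

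Next I would exploit the fact that increasing $t_k$ by any amount $h > 0$ adds the positive semidefinite matrix $h\,(e_i - e_j)(e_i - e_j)^{\top}$ to $\L(\G)$. By Weyl's monotonicity theorem --- equivalently, the Courant--Fischer min-max characterization of eigenvalues --- adding a positive semidefinite matrix to a symmetric matrix cannot decrease any eigenvalue when the eigenvalues are listed in sorted order. Therefore each eigenvalue $\lambda_\ell$, viewed as a function of $t_k$, satisfies $\lambda_\ell(t_k + h) \ge \lambda_\ell(t_k)$ for every $h > 0$, which is exactly the asserted monotonicity.

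I do not expect a serious obstacle here; the only subtlety is the possibility of eigenvalue crossings, at which the individual eigenvalue branches fail to be smooth. This is precisely why I would phrase the argument through the global Weyl inequality rather than through a pointwise derivative: the min-max formulation handles degenerate and crossing eigenvalues uniformly, with no case analysis. For intuition, at any $t_k$ where $\lambda_\ell$ is simple one has the clean first-order formula $\dd{\lambda_\ell}{t_k} = (v_i - v_j)^2 \ge 0$ for the corresponding unit eigenvector $v$, which recovers the same conclusion and shows that the inequality is strict unless $v$ happens to take equal values at the two endpoints of the red edge.
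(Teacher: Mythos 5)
Your proof is correct and is essentially the paper's own self-contained argument: the paper writes $x^{t}\L(\G)x=-\sum_{(i,j)\in E}\gamma_{ij}(x_i-x_j)^2$, observes that increasing $t_k$ increases this quadratic form, and invokes Courant minimax, which is exactly your rank-one positive semidefinite update $(e_i-e_j)(e_i-e_j)^{\top}$ combined with Weyl monotonicity. No gap; the two phrasings are interchangeable.
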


\begin{proof}
  The most straightforward proof is to use the results
  of~\cite{Bronski.DeVille.14}.  Consider the ray in $\R^R$ given by
  \begin{equation*}
    (t_1,t_2,\dots, t_R) = t(\alpha_1,\dots, \alpha_R),
  \end{equation*}
  with $\alpha_i>0$. This reduces to the framework
  of~\cite{Bronski.DeVille.14}, where the crossing polynomial was a
  function of only one independent variable, $t$.  Theorem 2.10
  of~\cite{Bronski.DeVille.14} implies that along this ray, $n_+(\G)$
  is nondecreasing, and Lemma 2.18 of~\cite{Bronski.DeVille.14}
  implies that it strictly increases by one every time we cross the
  set $\M(\G)=0$.  Inside this set, there is always a double
  eigenvalue at zero.

  A self-contained argument is as follows: First note that for any
  $x$,
  \begin{equation*}
    x^t \L(\G)x = -\sum_{(i,j)\in E} \gamma_{ij}(x_i-x_j)^2.
  \end{equation*}
  Clearly, if we increase any $t_k$, then this will decrease one of
  the $\gamma_{ij}$, thus increasing this quadratic form.  From the
  Courant minimax theorem, this implies that the eigenvalues of
  $\L(\G)$ are nondescreasing as a function of $t_k$, since all
  eigenvalues can be written as the maximum of this quadratic form
  restricted to some subspace.
\end{proof}

\subsection{Coefficients of polynomial}

In Definition~\ref{def:M}, we noted that $\L(\G)$ has a multiple zero
eigenvalue iff $\M(\G)=0$, and from Proposition~\ref{prop:nondec} this
means that eigenvalue crossings occur on the zero variety $Z(\G) =
\{\M(\G) = 0\}$ in $\R^R$, i.e. the connected components of $\Z(\G)^c$
in $\R^R$ are exactly the sets where $n_+(\G)$ is constant.

\newcommand{\pit}{\widetilde\pi}
\begin{define}
  For any spanning tree $T$ of $\Gamma$, we define $\pit(T)$ as the
  product of the weights of all of the black edges in $T$, i.e.
\begin{equation*}
  \pit(T) = \prod_{\substack{i<j,\\ (i,j)\in E(T),\\ \gamma_{ij}>0}}\gamma_{ij}.
\end{equation*}
\end{define}

\newcommand{\sij}[2]{S_{{#1}}^{{#2}}}
\newcommand{\ssij}[2]{\mathcal S_{{#1}}^{{#2}}}
\newcommand{\si}[1]{S_{{#1}}}
\newcommand{\ssi}[1]{\mathcal S_{{#1}}}

\begin{define}\label{def:SIJ}
  Let $\Gamma$ be a weighted graph with $R$ negative edges, and let
  $I,J$ be disjoint subsets of $[R]$.  We define $\sij I J$ as the set
  of all spanning trees of $\Gamma$ which contain all of the red edges
  in the index set $I$, and none of the red edges in index set $J$,
  i.e.
\begin{equation*}
    \sij IJ(\Gamma) := \left\{T \ |\ T\mbox{ is spanning tree of $\Gamma$}, e_i\in T \ \forall i\in I, e_j\not\in T\ \forall j\in J\right\},
\end{equation*}
and we define 
\begin{equation*}
  \ssij IJ(\Gamma) := \prod_{T \in \sij IJ}\pit(T).
\end{equation*}
Finally, as shorthand, we will define
\begin{equation*}
  \si I = \sij I{I^c},\quad \ssi I = \ssij I {I^c},
\end{equation*}
where the complement $I^c = [R]\setminus I$.
\end{define}

\begin{define}[Deletion and contraction]\label{def:dc}
  Let $\G = (V,E)$ be a weighted graph, and $e\in E(\G)$ an edge.  

\begin{itemize}
\item We denote by $\G\setminus e$ the graph obtained by removing edge
  $e$, and call this the {\bf deletion} of edge $e$ from $\G$.

\item If $e = (v_1,v_2)$ is an edge with $v_1\neq v_2$, we define
  $\G.e$ as follows: identify the two vertices $v_1$ and $v_2$ as
  a single vertex $v^*$; for any vertex $w$ connected to $v_1$ or
  $v_2$, we define the new edge weight $\gamma_{v^*,w} =
  \g_{v_1,w}+\g_{v_2,w}$.  We call this the {\bf contraction} of edge $e$ in $\G$.
\end{itemize}
\end{define}

\begin{define}
  Let $\G$ be a graph with $R$ red edges.  If $I,J$ are two disjoint
  subsets of $\{1,\dots, R\}$, then the graph $\G_I^J$ is the graph
  obtained by first deleting all of the edges in $I$, and then
  contracting all of the edges in $J$. As above, we write $\G_I$ as
  shorthand for $\G_I^{I^c}$.
\end{define}

\begin{prop}
Using this notation,
\begin{equation}\label{eq:ssi}
  \ssi I (\Gamma) = \M(\Gamma_I).
\end{equation}

\end{prop}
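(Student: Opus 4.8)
The plan is to read the identity off from the weighted Matrix Tree Theorem (Lemma~\ref{lem:mtt}) once the graph operations producing $\Gamma_I$ are recognized as a pruning of the spanning-tree sum. First I would recall the deletion--contraction dichotomy for spanning trees: fixing a red edge $e_k$, every spanning tree of $\Gamma$ either contains $e_k$ or does not, and these two families are in natural bijection with the spanning trees of the contraction $\Gamma.e_k$ (a tree through $e_k$ loses $e_k$ upon contraction) and of the deletion $\Gamma\setminus e_k$ (a tree avoiding $e_k$ is unchanged), respectively. Iterating this over all $R$ red edges --- contracting each red edge that $\si I$ forces into the tree and deleting each red edge that $\si I$ forces out, which are exactly the operations producing $\Gamma_I$ --- the surviving family of spanning trees is precisely $\si I$. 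Since every red edge has been removed by one operation or the other, $\Gamma_I$ carries only black edges.

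The key bookkeeping step is that this correspondence is weight-preserving for $\pit$. Because $\pit(T)$ is a product over the black edges of $T$ only, a red edge contributes no factor: deleting a red edge that a tree avoids changes nothing, and contracting a red edge $e_k$ that a tree uses merely erases $e_k$ from the product without introducing a weight. This is precisely where $\pit$ differs from the full tree weight $\pi$: in the ordinary deletion--contraction recursion for $\M$, contracting $e_k$ factors out $\gamma_{e_k}$, whereas here that factor is already absent from $\pit$, so no spurious product of red weights appears. Hence, under the bijection $T\leftrightarrow T'$ between $\si I$ and $\ST(\Gamma_I)$ one has $\pit(T)=\pi(T')$, the latter being the ordinary tree weight computed in $\Gamma_I$, where every surviving edge is black so that $\pi$ and $\pit$ agree. (One must handle the additive edge-merging rule of Definition~\ref{def:dc}: contractions may identify two black edges into a single parallel edge whose weight is their sum, but by distributivity such merging leaves $\M$ and the spanning-tree sum unchanged, so the bijection above may be read in the multigraph sense without loss.)

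Assembling these observations gives
\begin{equation*}
  \ssi I(\Gamma) \;=\; \sum_{T\in\si I}\pit(T) \;=\; \sum_{T'\in\ST(\Gamma_I)}\pi(T') \;=\; \M(\Gamma_I),
\end{equation*}
where the final equality is the weighted Matrix Tree Theorem (Lemma~\ref{lem:mtt}) applied to $\Gamma_I$. I expect the main obstacle to be exactly the weight tracking through contraction: one must verify that discarding red weights in $\pit$ is what cancels the factors $\gamma_{e_k}$ that ordinary contraction would otherwise emit, and that the additive merging rule of Definition~\ref{def:dc} does not corrupt the black-edge product. Both points reduce to the distributive-law form of the Matrix Tree Theorem, which is why iterating one red edge at a time --- rather than contracting and deleting en masse --- keeps the argument transparent.
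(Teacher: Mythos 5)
Your argument is correct and is exactly the route the paper takes: the paper's proof is a one-line appeal to the standard contraction--deletion theorem (citing Godsil--Royle \S13.2), and your write-up simply makes that bijection and the weight bookkeeping explicit. The one point worth flagging is notational rather than mathematical: you read $\Gamma_I$ as ``contract the red edges in $I$, delete those in $I^c$,'' which is the intended meaning needed for the identity, whereas the paper's Definition of $\G_I^J$ as literally written has deletion and contraction swapped (and, similarly, $\ssij IJ$ is written as a product over trees when it must be a sum for~\eqref{eq:ssipoly} to hold) --- your interpretation resolves these slips correctly.
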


The proof is the standard contraction-deletion theorem, q.v.~\cite[\S
13.2]{Godsil.Royle}.

\begin{remark}
  Let $\G$ be a graph with $R$ red edges.  For any $I\subseteq [R]$,
  the coefficient of the term $t^I$, i.e. $\prod_{k=1}^{\av{I}}
  t_{i_k}$, is $\ssi I(\Gamma)$.  In particular, this allows us to use
  the compact notation
  \begin{equation}\label{eq:ssipoly}
    \M(\G) = \sum_{I \in 2^R} \ssi I(\Gamma) t^I.
  \end{equation}

  If we constrain all positive weights to be one, the quantity $\ssi
  I(\Gamma)$ has a the combinatorial interpretation as the number of
  spanning trees that contain all red edges in $I$ and do not contain
  any red edges not in $I$.
\end{remark}

\begin{remark} We do not use~\eqref{eq:ssipoly} explicitly in the
  results below, but is quite powerful computationally: given $\Gamma,
  I$, to compute $\ssi I(\Gamma)$, we can compute $\Gamma_I$, write
  down its graph Laplacian $\mathcal{L}(\Gamma_I)$, then compute the
  linear term of its characteristic polynomial.  This process can be
  defined solely in terms of matrices, is easily automatable using a
  computer algebra system, and is relatively inexpensive.
\end{remark}

Finally, we state the main theorem of the paper:

\begin{thm}\label{thm:generic}
  For generic positive weights, all red rays give single eigenvalue
  crossings.  For all positive weights, generic red rays give single
  eigenvalue crossings.
\end{thm}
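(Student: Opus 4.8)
The plan is to translate the statement about eigenvalue crossings into a statement about a single algebraic locus in weight space, and then analyze that locus. Write the red weights along the ray as $w=t\alpha$ with direction $\alpha\in\R^R_{>0}$, and let $\tilde\L(s,w)$ denote the reduced Laplacian obtained by restricting $\L(\G)$ to the orthogonal complement of the constant vector, so that the zero eigenvalue of $\L(\G)$ has multiplicity $1+\mathrm{corank}\,\tilde\L(s,w)$. By Proposition~\ref{prop:nondec} the eigenvalues are monotone along the ray, so the ray produces a \emph{multiple} crossing at $w_0$ precisely when $\mathrm{corank}\,\tilde\L(s,w_0)\ge 2$ --- at such a point two eigenvalues pass through zero at the same $t$, whereas at a point with $\mathrm{corank}\,\tilde\L=1$ (a smooth point of the hypersurface $Z(\G)=\{\M(\G)=0\}$) exactly one eigenvalue crosses. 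Thus both halves of the theorem reduce to controlling the degeneracy locus $\Sigma(s):=\{w\in\R^R:\mathrm{corank}\,\tilde\L(s,w)\ge 2\}$, which is exactly the singular locus $\mathrm{Sing}\,Z(\G)$: the second half asks that for every $s$ a generic ray miss $\Sigma(s)\cap\R^R_{>0}$, and the first that for generic $s$ one has $\Sigma(s)\cap\R^R_{>0}=\emptyset$. Note that this is genuinely a corank condition and not a statement about the discriminant of the restricted one-variable polynomial: a ray tangent to a smooth sheet of $Z(\G)$ still gives a single (merely tangential) crossing, so the tangent rays, which necessarily exist, must not be counted as bad.

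For the second statement (fixed $s$, generic ray) I would argue by dimension. Since $Z(\G)$ is a hypersurface in $\R^R$ and $\M(\G)$ is square-free for the given $s$, its singular locus $\Sigma(s)$ has codimension at least two, i.e. $\dim\Sigma(s)\le R-2$. The set of directions meeting $\Sigma(s)\cap\R^R_{>0}$ is the radial projection of $\Sigma(s)$ into the $(R-1)$-dimensional space of directions, hence has dimension at most $R-2$ and is nowhere dense; every other direction gives $\tau$ distinct single crossings. The only delicate point is the square-free hypothesis: one must rule out that $\M(\G)$ acquires a repeated factor for the chosen positive $s$ (or treat such $s$ separately), which is where the positivity of $s$ and the Matrix-Tree expansion~\eqref{eq:ssipoly} of the coefficients enter.

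The first statement (generic $s$, all rays) is the main obstacle, and it cannot be reached by the same count. The total degeneracy locus $\widehat\Sigma=\{(s,w):\mathrm{corank}\,\tilde\L\ge 2\}$ has codimension three, so $\dim\widehat\Sigma=B+R-3$, and its projection to the $s$-axis can be dominant as soon as $R\ge 3$; hence for generic $s$ the locus $\Sigma(s)$ is typically nonempty \emph{somewhere} in $\R^R$. What must be shown is that these degeneracies are pushed out of the positive orthant for generic $s$, and this genuinely uses the sign structure rather than dimension alone. The approach I would take is to produce the explicit second-order obstruction cutting out $\widehat\Sigma$: the condition $\mathrm{corank}\,\tilde\L\ge 2$ is governed by a discriminant-type polynomial $\Delta_2(s,w)$ which, by a corank-two analogue of the Matrix-Tree theorem (Lemma~\ref{lem:mtt}), is a signed sum over spanning $2$-forests of $\G$. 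One then eliminates $w$ from the system $\M(\G)=0,\ \Delta_2=0$ to obtain a polynomial condition on $s$ alone whose nonvanishing forces $\Sigma(s)\cap\R^R_{>0}=\emptyset$, and shows this polynomial is not identically zero by exhibiting one good configuration $s^\star$ --- for instance by choosing the black weights so that $\L(\G_+)$ has simple spectrum and tracking the eigenvalue branches as the red weights are switched on.

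The hard part, then, is the no-positive-solution step for the $2$-forest discriminant: ruling out positive real solutions of the corank-two equations for generic $s$ is exactly where the combinatorial sign analysis of the spanning $2$-forest count must do the work, since an unsigned dimension count is defeated once $R\ge 3$. Granting that step, the good set of $s$ is the complement of a proper subvariety and is moreover open, hence generic, which gives the first statement; the second follows from the codimension estimate of the previous paragraph.
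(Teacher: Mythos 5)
Your reduction of ``multiple crossing'' to ``the ray meets the corank-$\ge 2$ locus'' is sound, and is in fact cleaner than you make it out to be: by Proposition~\ref{prop:nondec} every eigenvalue branch is nondecreasing along a positive ray, so no branch can have a zero of even order; hence a positive ray can never be tangent to $Z(\G)$ at a corank-one point, and every multiple root of $t\mapsto \M(\G)(t\alpha)$ automatically sits over a corank-$\ge 2$ point. (Your worry about tangential crossings is therefore moot, and your claim that tangent rays ``necessarily exist'' is false inside the positive orthant --- for $R=2$ this is exactly the statement $\Delta\le 0$.) The square-free caveat also evaporates, since $\M(\G)$ is multilinear in $(t_1,\dots,t_R)$ and so cannot acquire a repeated nonconstant factor. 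Granting these points, your dimension count does prove the second statement, and for $R=2$ it reproduces the paper's explicit picture: when $\Delta=0$ the singular locus is the single point $\bigl(A_{01}/A_{11},A_{10}/A_{11}\bigr)$ and only the one ray through it is bad.

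The genuine gap is the first statement, which you do not prove: you reduce it to ``the corank-two equations have no positive solutions for generic $s$'' and then write ``granting that step.'' That step is the entire content of the theorem once $R\ge 3$ (as you correctly diagnose, the naive codimension-three count is defeated there), and it is precisely where the paper's argument lives. The paper's route is to produce explicit discriminants rather than argue by dimension: for $R=2$, $Z(\G)$ is singular iff $\Delta=A_{11}A_{00}-A_{01}A_{10}=0$ (Lemma~\ref{lem:hyperbola}), and Theorem~\ref{thm:2-forest} identifies $\Delta=-\bigl(\sum_{F}\epsilon(F)\pi(F)\bigr)^2$ as the negated square of a signed spanning-$2$-forest sum; distinct forests contribute distinct monomials in the black weights, so $\Delta$ is not identically zero as a polynomial in $s$ and its nonvanishing is generic. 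For $R>2$ the paper replaces the single $\Delta$ by the family of wildcard discriminants $\Delta_w$ of Section~\ref{sec:R>2}, each of which is a two-variable discriminant of a contracted/deleted graph and hence again a negated square of a $2$-forest count. Your proposal gestures at exactly this machinery (``a corank-two analogue of the Matrix-Tree theorem \dots signed sum over spanning $2$-forests'') but supplies neither the identity nor the nonvanishing argument, so as written the first half of Theorem~\ref{thm:generic} remains unproved.
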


We prove these theorems in Sections~\ref{sec:R2} and~\ref{sec:R>2}
below.

\section{Multiple eigenvalue crossings, if $R=2$}\label{sec:R2}

In this section, we concentrate on the case where there are exactly
two red edges (see Section~\ref{sec:R>2} for the case of more than two
red edges).  In this section, we write the vector $(x_1,x_2)$ as
$(x,y)$.

Let us assume that $\Gamma_+$ is connected.  Then the main theorem
of~\cite{Bronski.DeVille.14}, Theorem 2.10, implies that as the
weights on the red edges go to zero, $n_+(\G) = 0$, and if they are
sufficiently large, $n_+(\G) = 2$.  Only two eigenvalues cross through
zero, so either they cross as a degenerate pair, or they do not.  In
this section we compute the conditions that determine this.

If $\Gamma_+$ is not connected, then the crossing polynomial will not
have a constant term, but after factoring out the lowest-order terms,
it will be in the form~\eqref{eq:mg2}, so that analogous statements
about genericity also hold.

\subsection{Discriminant}\label{sec:disc2}

Here we connect the discriminant of the 2-variable crossing polynomial
to the combinatorics of the graph.  Recall from before that we have
\begin{equation}\label{eq:mg2}
  \M(\G) = A_{11} xy  - A_{10} x - A_{01} y + A_{00},
\end{equation}
where $A_I = \ssi I(\Gamma)$.  Since $\G_+$ is connected, $A_{00}>0$.

\begin{define}
  The {\bf discriminant} of the polynomial~\eqref{eq:mg2} is the quantity
  \begin{equation*}
    \Delta = A_{11}A_{00} - A_{01}A_{10}.
  \end{equation*}
\end{define}

\begin{lem}\label{lem:hyperbola}
  If $\Delta \neq 0$, the zero-set $Z(\G)$ defines a hyperbola, with
  the minimum Euclidean distance $d$ between the branches given by
  \begin{equation}\label{eq:Euc2}
    d = \frac{\sqrt{2\Delta}}{A_{11}}.
  \end{equation}

  In the case $\Delta=0$ the set $Z(\Gamma)$ is a reducible variety
  and degenerates to the union of the lines $x=A_{01}/A_{11}$ and
  $y=A_{10}/A_{11}$.
\end{lem}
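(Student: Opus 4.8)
The plan is to put the affine conic \eqref{eq:mg2} into a normal form that exhibits it as a (possibly degenerate) rectangular hyperbola, and then read off the geometry directly. Throughout I assume $A_{11}\neq 0$; since $A_{11}=\ssi{\{1,2\}}(\G)$ is the (weighted) count of spanning trees that use both red edges, this is precisely the condition under which \eqref{eq:mg2} is genuinely quadratic rather than a single line, and it is also what is needed for the divisions by $A_{11}$ in the statement to make sense. Under this assumption I would complete the product (the hyperbolic analogue of completing the square): a direct expansion shows that $A_{11}\bigl(x-\frac{A_{01}}{A_{11}}\bigr)\bigl(y-\frac{A_{10}}{A_{11}}\bigr)$ reproduces the $xy$, $x$, and $y$ terms of \eqref{eq:mg2} exactly, so that
\[
  \M(\G)=A_{11}\left(x-\frac{A_{01}}{A_{11}}\right)\left(y-\frac{A_{10}}{A_{11}}\right)+\frac{\Delta}{A_{11}},
\]
the residual constant being $A_{00}-\frac{A_{01}A_{10}}{A_{11}}=\frac{\Delta}{A_{11}}$ by the definition of $\Delta$.

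First I would introduce the translated coordinates $u=x-\frac{A_{01}}{A_{11}}$, $v=y-\frac{A_{10}}{A_{11}}$. A translation is a rigid motion and hence preserves all Euclidean distances, so it is harmless for the distance computation; in these coordinates the zero set is simply $Z(\G)=\{\,uv=-\Delta/A_{11}^2\,\}$. The degenerate case is now immediate: if $\Delta=0$ the equation reads $uv=0$, i.e. $u=0$ or $v=0$, which are exactly the two lines $x=A_{01}/A_{11}$ and $y=A_{10}/A_{11}$, establishing the reducible degeneration claimed in the lemma.

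For $\Delta\neq 0$ put $c=-\Delta/A_{11}^2\neq 0$, so that $Z(\G)$ is the rectangular hyperbola $uv=c$, whose two branches occupy opposite pairs of quadrants according to $\sign(c)$. To obtain the minimum Euclidean distance between the branches I would pass to principal axes by the orthogonal (hence distance-preserving) rotation $u=(X-Y)/\sqrt2$, $v=(X+Y)/\sqrt2$, which turns the equation into the standard form $X^2-Y^2=2c$. One then minimizes $\norm{(u_1,v_1)-(u_2,v_2)}$ over one point taken from each branch; the reflection symmetry of the hyperbola across its transverse axis forces the optimal pair to be symmetric about the center, collapsing the problem to a single-variable minimization whose solution places the nearest points at the two vertices on the transverse axis. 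Substituting $c=-\Delta/A_{11}^2$ and simplifying then yields \eqref{eq:Euc2}.

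The algebra is routine; the only genuinely geometric input --- and thus the step I would be most careful about --- is the assertion that the Euclidean-closest points on the two branches are the vertices lying on the transverse axis, which is exactly where the symmetry reduction and the one-variable minimization do the real work. The two bookkeeping points to keep track of are the standing hypothesis $A_{11}\neq 0$, without which the curve degenerates to a line and the formulae are meaningless, and the sign of $\Delta$, which merely flips the hyperbola between the two possible pairs of quadrants without affecting the inter-branch distance, so that the final expression depends only on $\av{\Delta}$.
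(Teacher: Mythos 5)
Your normal form is correct and is the natural route here (the paper states this lemma without giving a proof): expanding $A_{11}\bigl(x-\tfrac{A_{01}}{A_{11}}\bigr)\bigl(y-\tfrac{A_{10}}{A_{11}}\bigr)$ does recover the $xy$, $x$, $y$ terms of \eqref{eq:mg2}, the residual constant is $A_{00}-A_{01}A_{10}/A_{11}=\Delta/A_{11}$, and the translated equation $uv=-\Delta/A_{11}^{2}$ immediately yields the degenerate union of the two lines when $\Delta=0$. The genuine gap is in the one step you declined to execute --- ``substituting and simplifying then yields \eqref{eq:Euc2}'' --- because that is exactly the step that does not check out against the stated formula. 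For the rectangular hyperbola $uv=c$ the closest pair of points on opposite branches are the two vertices, e.g.\ $(\sqrt{\av{c}},\sqrt{\av{c}})$ and $(-\sqrt{\av{c}},-\sqrt{\av{c}})$ when $c>0$, so the inter-branch distance is $2\sqrt{2\av{c}}$; with $c=-\Delta/A_{11}^{2}$ this is $2\sqrt{2\av{\Delta}}/\av{A_{11}}$, which is \emph{twice} the right-hand side of \eqref{eq:Euc2}. (The quantity $\sqrt{2\av{c}}=\sqrt{2\av{\Delta}}/\av{A_{11}}$ is the semi-transverse axis, i.e.\ the distance from the center to either branch.) So either your argument must produce a factor of $\tfrac12$ that it currently does not, or \eqref{eq:Euc2} is recording a semi-distance; carrying out the ``routine algebra'' would have forced you to confront this. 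Relatedly, since Theorem~\ref{thm:2-forest} later shows $\Delta\le 0$, the expression $\sqrt{2\Delta}$ must in any case be read as $\sqrt{2\av{\Delta}}$, a point you do flag correctly at the end.

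Two smaller matters. Your symmetry argument for why the minimizing pair consists of the two vertices is loose: central symmetry maps an optimal pair to another optimal pair, but does not by itself force an optimal pair to be centrally symmetric. This is easily repaired without any rotation at all: for $c>0$ parametrize the branches as $(t,c/t)$, $t>0$, and $(-s,-c/s)$, $s>0$, so the squared distance is
\begin{equation*}
(t+s)^{2}+c^{2}\left(\frac1t+\frac1s\right)^{2}\;\ge\;4ts+\frac{4c^{2}}{ts}\;\ge\;8c,
\end{equation*}
by AM--GM, with equality exactly at $t=s=\sqrt{c}$; the case $c<0$ is identical after reflecting one coordinate. Finally, your standing hypothesis $A_{11}\neq 0$ is the right one and is consistent with the paper's later remark that the gap is undefined precisely when $A_{11}=0$.
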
 

\begin{remark}
This clearly illustrates the level-repulsion phenomenon: while we can make one 
eigenvalue vanish by varying the strength of one of the edges (say 
$x$) we typically {\em cannot} make two eigenvalues vanish by varying the 
strength of two edges. The typical situation, when $\Delta\neq 0$, is that 
there are two disconnected branches of the zero-set, each corresponding to 
{\em a single zero eigenvalue}. It is only in the degenerate case where
 $\Delta=0$ that we are able to create a double zero eigenvalue. 
\end{remark}

\begin{figure}[ht]
\begin{centering}
  \includegraphics[width=0.45\textwidth]{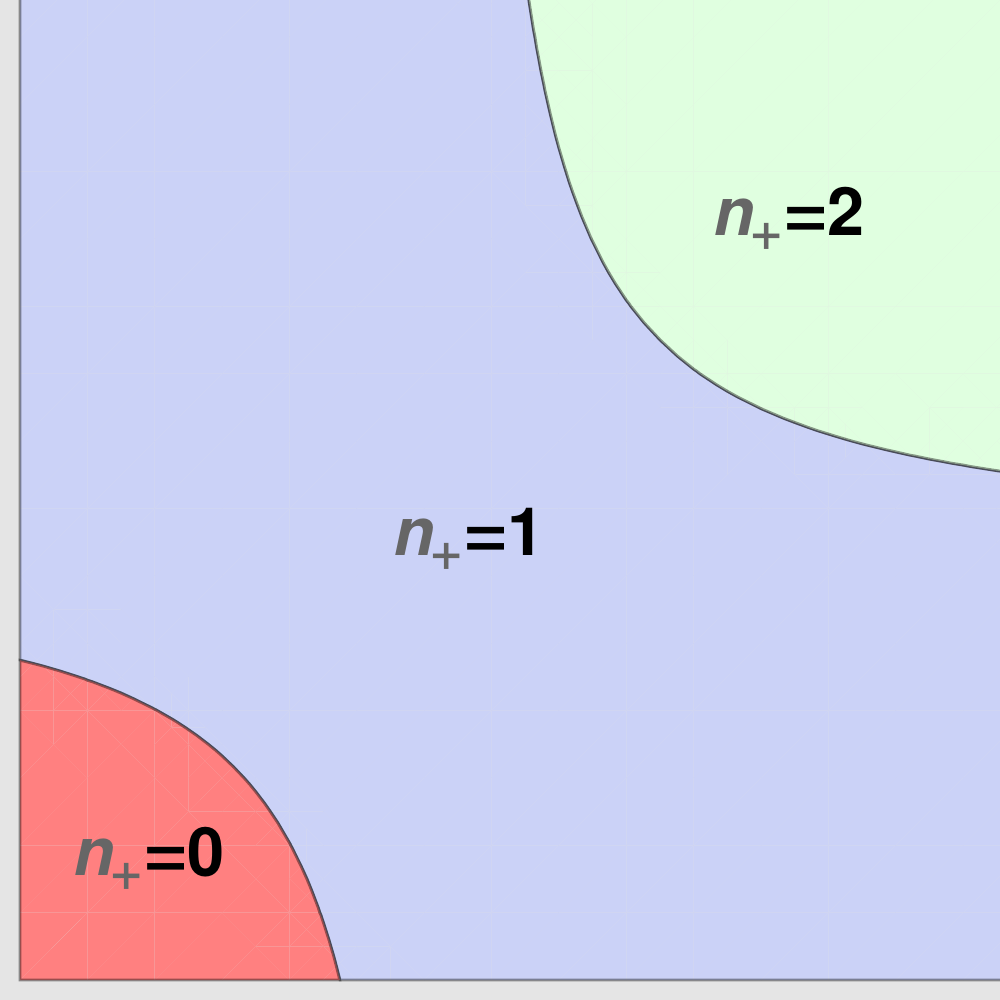}
  \includegraphics[width=0.45\textwidth]{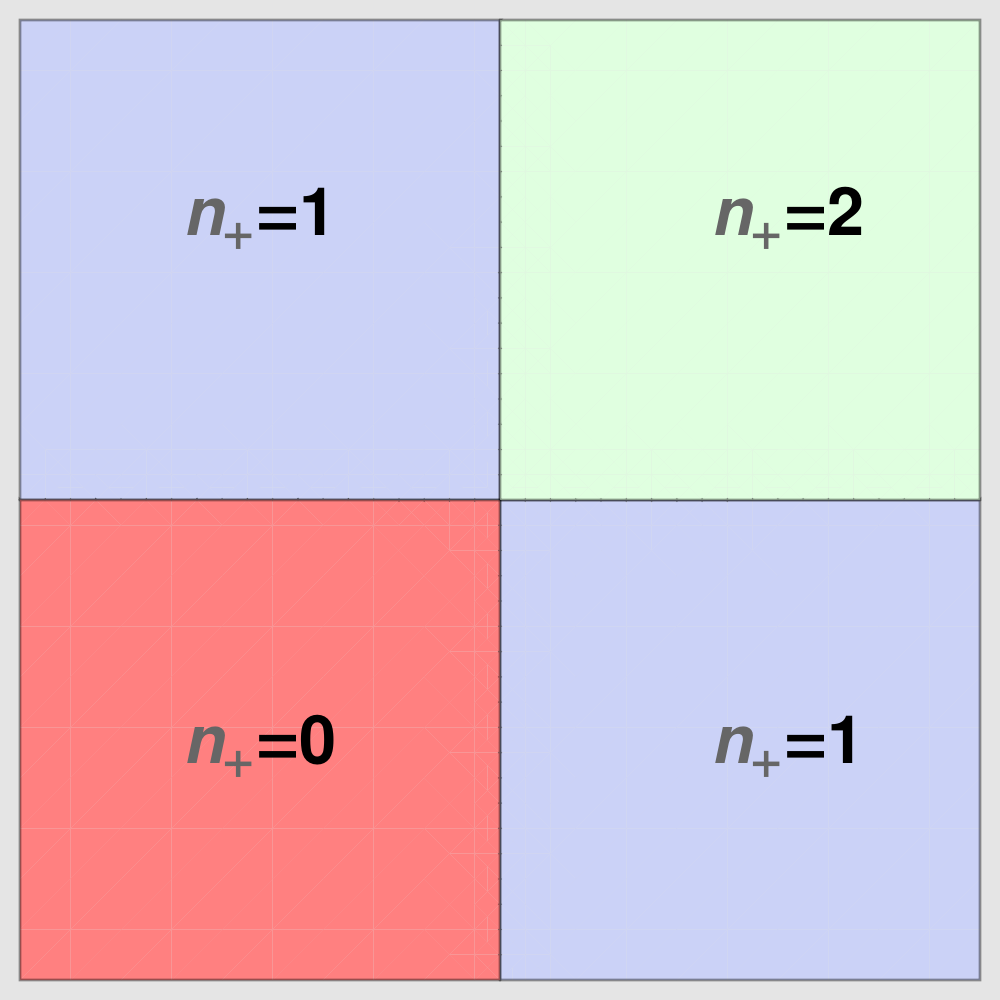}
  \caption{A schematic diagram of the nondegenerate and degenerate
    cases when there are two red edges.}
  \label{fig:schematic}
\end{centering}
\end{figure}

\subsection{Proof of Theorems~\ref{thm:generic} for $R=2$}
It is clear from these formulae that the presence of a potential
double root is nongeneric, since the weights of the matrix need to
satisfy a polynomial equation.  Thus for $R=2$, we have proved
Theorem~\ref{thm:generic}: as long as $\Delta\neq 0$, then all rays
give single eigenvalue crossings, and this is clearly generic.
Moreover, even if $\Delta = 0$, the only way to actually obtain the
double eigenvalue crossing is along the ray that passes through the
point $(A_{01}/A_{11}, A_{10}/A_{11})$.

From this, it is clear that a double eigenvalue is doubly nongeneric:
we need the discriminant to be zero, and even then, we need to choose
the correct ratio of weights on the red edges to hit the sweet spot.
See Figure~\ref{fig:schematic}.

\subsection{Counting forests}

We have proved the main theorems by showing that the discriminant is
the quantity that controls how closely the eigenvalues can cross We
now give a combinatorial interpretation to this discriminant.

\begin{define}
  For any graph, a {\bf spanning 2-forest} is a spanning subgraph with
  two connected components --- in other words, a spanning tree with
  one edge removed.  More generally, a {\bf spanning $k$-forest} is a
  spanning subgraph with $k$ connected components.
\end{define}

\newcommand{\ft}[2]{F^{(2)}_{{#1},{#2}}}

\newcommand{\fk}[3]{\mc{F}^{({#1})}_{{#2},{#3}}}

\begin{define}
  Let $G=(V,E)$ be a signed graph with $\av{V} = n$.  Let $U,W$ be
  subsets of $[n]$ with $\av{U} = \av{W} = k$.  We define $\fk k U W$
  as the set of all spanning $k$-forests such that every tree in the
  forest has exactly one vertex in $U$ and one vertex in $W$.  

  We can associate a sign to each forest in $F\in\fk k U W$, denoted
  $\e(F)$, as follows.  Choose and fix some enumeration for $U,W$, and
  define the map $g\colon W\to U$ by saying that $u = g(w)$ is the
  unique element of $U$ that is in the same component of $F$ as $w$.
  Then $g(W)$ is a permutation of $U$ and thus has a sign, and this is
  $\e(F)$.

\end{define}

\begin{thm}
\label{thm:2-forest}
Let $\Gamma$ be a weighted graph with two red edges and where the red
edges have weight $x$ and $y$. Let $i,j$ be the vertices of the first
red edge, and $k,l$ be the vertices of the second.  We can form the
polynomial~\eqref{eq:mg2} as above.  If we choose $U=\{i,j\}$ and
$W=\{k,l\}$, then
\begin{equation}\label{eq:Delta1}
  \Delta = - \left(\sum_{F \in \ft U W }\epsilon(F)\pi(F) \right)^2 
\end{equation}
where $\pi(F)$ is defined as in~\eqref{eq:defofpi}.  If all of the
black edges have integral weight, then this and~\eqref{eq:Euc2}
implies that the Euclidean distance between the two branches of
$Z(\Gamma)$ is $\sqrt2$ times a rational number.
\end{thm}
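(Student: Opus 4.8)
The plan is to realize the four coefficients $A_{00},A_{10},A_{01},A_{11}$ as the expansion coefficients of a single rank-two determinant, and then recognize the resulting discriminant as a perfect square via the all-minors matrix-tree theorem. Write $b_1=\mathbf{e}_i-\mathbf{e}_j$ and $b_2=\mathbf{e}_k-\mathbf{e}_l$ for the incidence vectors of the two red edges, and let $L_B=\L(\G_+)$ be the Laplacian of the black subgraph (connected by hypothesis). Because a red edge of weight $-t$ contributes $+t\,bb^{\mathsf T}$ to the Laplacian in the paper's sign convention, we have
\begin{equation*}
  \L(\G) = L_B + x\,b_1 b_1^{\mathsf T} + y\,b_2 b_2^{\mathsf T}.
\end{equation*}
Deleting a reference vertex $N\notin\{i,j,k,l\}$ and writing $M$ for the reduced black Laplacian and $\hat b_1,\hat b_2$ for the truncated incidence vectors, the weighted matrix-tree theorem gives $\M(\G)=(-1)^{N-1}\det(M+x\hat b_1\hat b_1^{\mathsf T}+y\hat b_2\hat b_2^{\mathsf T})$.

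First I would apply the matrix-determinant lemma to this rank-two update. With $p=\hat b_1^{\mathsf T}M^{-1}\hat b_1$, $q=\hat b_2^{\mathsf T}M^{-1}\hat b_2$, and $r=\hat b_1^{\mathsf T}M^{-1}\hat b_2$, expanding the resulting $2\times2$ determinant yields
\begin{equation*}
  \M(\G) = A_{00}\bigl(1 + px + qy + (pq-r^2)xy\bigr),\qquad A_{00}=(-1)^{N-1}\det M = \M(\G_+).
\end{equation*}
Matching this against $\M(\G)=A_{11}xy-A_{10}x-A_{01}y+A_{00}$ reads off $A_{10}=-A_{00}p$, $A_{01}=-A_{00}q$, and $A_{11}=A_{00}(pq-r^2)$. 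Substituting into the discriminant then collapses cleanly,
\begin{equation*}
  \Delta = A_{11}A_{00} - A_{01}A_{10} = A_{00}^2(pq-r^2) - A_{00}^2 pq = -\bigl(A_{00}\,r\bigr)^2,
\end{equation*}
so the entire content of \eqref{eq:Delta1} reduces to identifying $A_{00}\,r$ with the signed forest sum $\sum_{F\in\ft U W}\epsilon(F)\pi(F)$ up to sign. (As a consistency check, $A_{10}=-A_{00}p$ is the classical effective-resistance formula, since $-p>0$ and $A_{10}$ counts the black $2$-forests separating $i$ from $j$; the off-diagonal $r$ is the corresponding transfer quantity.)

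For that identification I would use $A_{00}r=(-1)^{N-1}\hat b_1^{\mathsf T}\,\mathrm{adj}(M)\,\hat b_2$, expand the right-hand side as the alternating combination $\mathrm{adj}(M)_{ik}-\mathrm{adj}(M)_{il}-\mathrm{adj}(M)_{jk}+\mathrm{adj}(M)_{jl}$ of cofactors of the reduced Laplacian, and invoke Chaiken's all-minors matrix-tree theorem. That theorem expresses precisely this combination --- a two-row, two-column minor of $\L(\G_+)$ indexed by $U=\{i,j\}$ and $W=\{k,l\}$ --- as a sum over spanning $2$-forests in which each tree carries exactly one vertex of $U$ and one of $W$, weighted by $\pi(F)$ and signed by the permutation matching $W$ to $U$, which is exactly the sign $\epsilon(F)$ of the theorem. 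Hence $A_{00}r=\pm\sum_{F\in\ft U W}\epsilon(F)\pi(F)$, and since only the square enters, $\Delta=-\bigl(\sum_{F}\epsilon(F)\pi(F)\bigr)^2$. I expect this to be the main obstacle: one must verify that the internal signs produced by the minor expansion agree, forest by forest, with the permutation sign $\epsilon(F)$, and that the value is independent of the grounded vertex $N$ (a reference-node-independence argument disposes of the case where $N$ cannot avoid $\{i,j,k,l\}$). The global sign is irrelevant because $\Delta$ only sees $(A_{00}r)^2$.

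Finally, the rationality statement is immediate from integrality. If every black edge has integer weight, then each $\pi(F)$ is a product of integers and $\epsilon(F)\in\{\pm1\}$, so $S:=\sum_{F\in\ft U W}\epsilon(F)\pi(F)\in\Z$; likewise $A_{11}\in\Z$, being a sum of such products over the spanning trees through both red edges. Since $\Delta=-S^2$, formula \eqref{eq:Euc2} exhibits the minimal distance between the branches of $Z(\Gamma)$ as $\sqrt2$ times $|S|/A_{11}\in\Q$, i.e.\ as a rational multiple of $\sqrt2$.
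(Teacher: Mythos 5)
Your proposal is correct, and it reaches \eqref{eq:Delta1} by a genuinely different route than the paper. The paper never inverts anything: it writes $A_{00}$, $A_{10}+A_{11}y$, $A_{01}+A_{11}x$, and $A_{11}$ as sums of first, second, and third minor determinants of the full Laplacian, and then invokes the Dodgson (Desnanot--Jacobi) condensation identity to collapse $\Delta$ to $-(\L_{12,34})^2$ in one stroke; Chaiken's all-minors theorem is then used to read $\L_{12,34}$ as the signed $2$-forest sum. You instead ground a vertex, view $\L(\G)$ as a rank-two perturbation $M+x\hat b_1\hat b_1^{\mathsf T}+y\hat b_2\hat b_2^{\mathsf T}$ of the reduced black Laplacian, and apply the matrix-determinant lemma, so that the perfect-square structure of $\Delta$ falls out as the off-diagonal entry $r=\hat b_1^{\mathsf T}M^{-1}\hat b_2$ of the $2\times2$ capacitance matrix --- an electrical ``transfer impedance'' interpretation the paper does not make explicit. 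Your route buys transparency (the square is visible before any combinatorics), handles the shared-vertex case uniformly rather than as a separate computation, and dispenses with the auxiliary linear relations among minors (such as $\av{\L_{13,13}}+\av{\L_{14,14}}+\av{\L_{13,23}}+\av{\L_{14,23}}=\av{\L_{12,34}}$) that the paper needs to set up the Dodgson identity; the paper's route buys the avoidance of $M^{-1}$ and of any choice of reference vertex. Both arguments bottom out in the same application of Chaiken's theorem, and your forest-by-forest sign verification for $\hat b_1^{\mathsf T}\mathrm{adj}(M)\hat b_2$ is flagged rather than carried out --- but that is comparable to the level of detail in the paper's own treatment of that step, and as you note the overall sign is immaterial since only $(A_{00}r)^2$ enters. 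The integrality argument for the $\sqrt2\cdot\Q$ statement matches the paper's.
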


To prove this theorem, we first need some auxiliary results.

\begin{thm}[Chaiken~\cite{Chaiken.82}]
  Let $G=(V,E)$ be a signed graph with $\av{V} = n$.  Let $U,W$ be
  subsets of $[n]$ with $\av{U} = \av{W} = k$.  Define $|\mc L_{U,W}|$
  to be the minor determinant formed by removing the rows $U$ and the
  columns $W$ from the graph Laplacian $\mc{L}(G)$. Then
  \begin{equation}\label{eq:supersum}
    |\mc L_{U,W}| = (-1)^{\l(\sum_Uu + \sum_Ww\r)} \sum_{F \in \fk k U W} \sigma(F).
  \end{equation}
\end{thm}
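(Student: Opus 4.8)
The plan is to prove this all-minors matrix-tree identity through the incidence-matrix factorization of the Laplacian combined with the Cauchy--Binet formula. First I would fix an arbitrary orientation of each edge and let $D$ be the associated signed incidence matrix, an $n\times\av{E}$ matrix whose $(v,e)$ entry is $+1$ or $-1$ according as $v$ is the head or the tail of $e$ (and $0$ if $v\notin e$), and let $B$ be the diagonal matrix of edge weights $\gamma_e$. With the sign convention of~\eqref{eq:defofL} one has $\mc{L}(G) = -D\,B\,D^{\mathsf T}$. Deleting the rows indexed by $U$ and the columns indexed by $W$ then factors the minor as $\mc L_{U,W} = -\,D_{\widehat U}\,B\,D_{\widehat W}^{\mathsf T}$, where $D_{\widehat U}$ (resp.~$D_{\widehat W}$) is $D$ with the rows in $U$ (resp.~$W$) removed; each is an $(n-k)\times\av{E}$ matrix.

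Next I would apply Cauchy--Binet to this product of rectangular matrices:
\begin{equation*}
  \det\mc L_{U,W} = (-1)^{n-k}\!\!\sum_{\substack{S\subseteq E\\ \av{S}=n-k}}\!\! \det\l(D_{\widehat U}\r)_S \cdot \l(\prod_{e\in S}\gamma_e\r) \cdot \det\l(D_{\widehat W}\r)_S,
\end{equation*}
where $(D_{\widehat U})_S$ denotes the square submatrix on the columns in $S$. The classical unimodularity lemma for incidence matrices collapses this sum: $\det(D_{\widehat U})_S$ equals $\pm1$ exactly when the $n-k$ edges of $S$ form a spanning forest each of whose $k$ components contains precisely one vertex of $U$, and it vanishes otherwise (a cycle in $S$ forces a linear dependence among the columns, and a component carrying no vertex of $U$ forces a vanishing block of rows). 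Requiring both incidence determinants to be nonzero therefore forces $S$ to be a spanning $k$-forest in which each tree meets $U$ once and $W$ once, i.e.~$S\in\fk k U W$, and the weight factor $\prod_{e\in S}\gamma_e$ is exactly $\pi(S)$.

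The remaining and most delicate step is the sign bookkeeping. Because the same oriented edges of $S$ appear in both $(D_{\widehat U})_S$ and $(D_{\widehat W})_S$, the orientation-dependent contributions occur squared and cancel, so the product $\det(D_{\widehat U})_S\cdot\det(D_{\widehat W})_S$ is independent of the chosen orientation and reduces to a pure combinatorial sign. I would evaluate it by permuting the rows of each factor so that the forest block-diagonalizes into its $k$ trees; within each block the matrix is a reduced tree-incidence matrix of determinant $\pm1$, and comparing the two row-reorderings (for $V\setminus U$ and for $V\setminus W$) against the common column order on $S$ produces exactly the permutation $g\colon W\to U$ sending each $W$-vertex to the $U$-vertex of its tree, hence the sign $\e(F)$. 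The remaining global factor records the positions of the deleted indices $U,W$ within $\{1,\dots,n\}$ and must be reconciled with the convention-dependent $(-1)^{n-k}$ above to produce the stated $(-1)^{\sum_U u + \sum_W w}$. Writing $\sigma(F)=\e(F)\,\pi(F)$ and collecting terms then yields~\eqref{eq:supersum}.

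I expect this final sign computation to be the main obstacle: both the extraction of the clean global factor and the identification of the residual sign with $\e(F)$ require careful tracking of how the fixed enumerations of $U$ and $W$, the positions of the deleted indices, and the block structure of the forest interact. The supporting ingredients --- the incidence factorization, Cauchy--Binet, and the unimodularity of reduced tree-incidence matrices --- are standard and may be quoted; it is the reconciliation of the several competing sign conventions that demands the real care.
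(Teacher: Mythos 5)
Your approach is sound, but it is worth noting that the paper does not actually prove this statement: it simply cites Chaiken, whose argument in \cite{Chaiken.82} is a purely combinatorial one (a sign-reversing involution on a set of decorated subgraphs, used there to derive the directed matrix-tree theorem). You instead take the linear-algebraic route: factor $\mc L = -D B D^{\mathsf T}$ through the signed incidence matrix, apply Cauchy--Binet to the deleted-row/deleted-column product, and invoke the unimodularity of reduced tree-incidence matrices to collapse the sum onto $\fk k U W$. That architecture is correct and standard, and your observation that the orientation-dependence cancels because each edge column is negated in both factors simultaneously is the right key point. Two caveats. First, the step you defer --- showing that the residual sign of $\det(D_{\widehat U})_S\det(D_{\widehat W})_S$ is exactly $\e(F)$ times a global factor depending only on the positions of $U$ and $W$ --- is genuinely the hard part and is only described, not executed; a complete write-up would need the block-diagonalization argument carried through, including the transpositions needed to sort the retained rows of $V\setminus U$ and $V\setminus W$ into matching component blocks. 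Second, with the paper's sign convention the Laplacian is the \emph{negative} of $D B D^{\mathsf T}$, so your Cauchy--Binet identity carries an extra factor $(-1)^{n-k}$ that must be absorbed into the stated prefactor $(-1)^{\sum_U u+\sum_W w}$; you flag this but do not resolve it. Since the paper only ever uses this theorem through squared minors (as in $\Delta=-(\L_{12,34})^2$), the unresolved global sign is immaterial to the application, but it should be pinned down if the identity is to stand on its own.
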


\begin{proof}
  See Chaiken~\cite[\S 3]{Chaiken.82} for a proof in the general
  (directed graph) case.  He uses this result to prove the directed
  MTT, the directed generalization of~\eqref{eq:MTT} above.
\end{proof}

\begin{remark}
  The case we are particularly interested in here is $\av{U} = \av{W}
  = 2$.  In this case, forests which assign the least element of $U$
  and the least element of $W$ to the same component are counted $+1$,
  and forests which assign the least element of $U$ and the least
  element of $W$ to different components are counted $-1$.
\end{remark}

\begin{remark}
It is not hard to see that this theorem implies that there are a number 
of linear relationships among the minor determinants of a graph Laplacian. 
One example is
\begin{equation}\label{eq:rel1}
 \av{\L_{12,13}} + \av{\L_{12,14}} = \av{\L_{12,34}}.  
\end{equation}
To see this note that the first term, $\av{\L_{12,13}}$, is equal to
minus the number of 2-forests with vertex $1$ in one component and
vertices $2$ and $3$ in the other component. This collection of
2-forests can be split into two types: those in which vertex $4$ is in
the component with vertex $1$ and those in which vertex $4$ is in the
component with vertices $2$ and $3$.  Similarly the second term,
$\av{\L_{12,14}}$, is equal to the number of 2-forests with vertex $1$
in one component and vertices $2$ and $4$ in the other component. This
collection of 2-forests can similarly be split into two types: those
in which vertex $3$ is in the component with vertex $1$ and those in
which vertex $4$ is in the component with vertices $2$ and $3$.  The
terms in which vertices $2$, $3$ and $4$ share a component cancel, and
we are left with the number of 2-forests in which $1$ and $3$ share a
component and $2$ and $4$ share a component, minus the number in which
$1$ and $4$ share a component and $2$ and $3$ share a component. This
is exactly $\av{\L_{12,34}}$.  A similar identity, which we will use
in the proof, is
\begin{equation}
  \av{\L_{13,13}} + \av{\L_{14,14}} + \av{\L_{13,23}} + \av{\L_{14,23}} = \av{\L_{12,34}}.
\end{equation}

\end{remark}
The next result we use is an identity variously attributed to
Dodgson~\cite{Dodgson.1866}, Jacobi~\cite{Bressoud.99}, and
Desnanot~\cite{Muir.1906} on determinants of minor determinants. There
is a nice bijective proof due to Zeilberger~\cite{Zeilberger.98}.

\begin{thm}[Dodgson]
  Let $M$ be a matrix, and $M_{U,W}$ denote the submatrix formed by
  deleting the rows in $U$ and the columns in $W$. Then
  \begin{equation*}
    \av{M} \av{M_{ij,kl}} - \av{M_{i,k}}\av{M_{j,l}} = -\av{M_{i,l}M_{j,k}}.
  \end{equation*}
  
\end{thm}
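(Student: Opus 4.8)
The plan is to treat the claim as a \textbf{polynomial identity} in the $n^2$ entries of $M$ and to prove it by reducing to one clean configuration of the indices, then removing all genericity hypotheses by continuity. Both sides are polynomials in the entries, so it suffices to verify the identity on a dense subset of matrices; this is what licenses me, at the convenient moment, to assume that an auxiliary minor is invertible. I will assume throughout that $i<j$ and $k<l$, which is the convention under which the identity holds with no extra sign factors.

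First I would dispatch the \emph{corner case}, where the deleted rows are the last two and the deleted columns are the last two, i.e. $i=n-1,\ j=n,\ k=n-1,\ l=n$. Write $M$ in $2\times 2$ block form whose top-left block $A$ is the $(n-2)\times(n-2)$ matrix obtained by deleting the last two rows and columns, so that $\av{M_{ij,kl}} = \det A$; assuming $A$ invertible, introduce the $2\times 2$ Schur complement $S$ of $A$ in $M$. Then $\det M = \det A\cdot \det S$, and each of the four $(n-1)\times(n-1)$ minors $\av{M_{i,k}},\av{M_{i,l}},\av{M_{j,k}},\av{M_{j,l}}$ is a bordered determinant built from $A$ together with one border column, one border row, and one corner entry, hence equals $\det A$ times the corresponding entry of $S$. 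Substituting gives the chain
\[
  \av{M_{i,k}}\av{M_{j,l}} - \av{M_{i,l}}\av{M_{j,k}} = (\det A)^2\,\det S = \det A\cdot \det M = \av{M}\,\av{M_{ij,kl}},
\]
which is exactly the assertion. The virtue of this route is that \emph{no cofactor signs appear}: every border occupies the natural last position, so all four bordered determinants expand with a uniform sign.

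Next I would transport the corner identity to arbitrary ordered indices by choosing an order-preserving row permutation $P$ sending $\{i,j\}$ to $\{n-1,n\}$ and an order-preserving column permutation $Q$ sending $\{k,l\}$ to $\{n-1,n\}$, and applying the corner case to $PMQ$. Under $M\mapsto PMQ$ the full determinant picks up $\sign(P)\sign(Q)$, while each minor in the identity picks up the sign of the permutation induced on its \emph{retained} indices. The step that must be checked carefully is that these induced signs recombine, term by term, into the \emph{same} global factor $\sign(P)\sign(Q)$, so that it cancels out of the equation and the corner identity becomes the general one with no residual sign. Finally, having proved everything under the standing hypothesis $\det A\neq 0$, I extend to all $M$ by density, since $\det M_{ij,kl}$ is a nonzero polynomial and its nonvanishing locus is dense.

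The main obstacle is exactly this permutation sign-bookkeeping; it is routine but error-prone. A cleaner-looking alternative that hides the signs in a black box is the adjugate route: for invertible $M$ one expands the $2\times 2$ minor of $\mathrm{adj}(M)$ on rows $\{i,j\}$ and columns $\{k,l\}$ directly in cofactors, obtaining $(-1)^{i+j+k+l}\big(\av{M_{i,k}}\av{M_{j,l}}-\av{M_{i,l}}\av{M_{j,k}}\big)$ up to transposition, and compares this with Jacobi's complementary-minor identity, which evaluates the same quantity as $(-1)^{i+j+k+l}\,\det M\cdot \av{M_{ij,kl}}$; the matching sign prefactors cancel. This merely trades the permutation bookkeeping for a careful statement and proof of Jacobi's identity, so I would prefer the Schur-complement argument above as the self-contained option, noting that the cited bijective argument of Zeilberger furnishes a sign-reversing-involution proof that avoids determinant machinery altogether.
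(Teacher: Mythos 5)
Your proof is correct, but there is nothing in the paper to compare it against: the paper states this identity as classical (attributing it variously to Dodgson, Jacobi, and Desnanot) and simply cites Zeilberger's bijective proof rather than giving one. Your Schur-complement argument is a sound, self-contained substitute. The corner case checks out: with $A=M_{ij,kl}$ invertible and $S$ the $2\times 2$ Schur complement, each bordered minor equals $\det A$ times an entry of $S$ (with the indices swapped, e.g. $\av{M_{n-1,n-1}}=\det A\cdot S_{22}$, which is harmless since only $\det S$ enters), giving $\av{M_{i,k}}\av{M_{j,l}}-\av{M_{i,l}}\av{M_{j,k}}=(\det A)^2\det S=\det A\cdot\det M$. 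The sign bookkeeping you flag but do not carry out does close up as you predict: for order-preserving $P,Q$ one computes $\sign(P)\sign(Q)=(-1)^{i+j+k+l}$, the $(n-2)$-minor is unchanged, and each of the two products of $(n-1)$-minors acquires exactly the factor $(-1)^{j+l}\cdot(-1)^{i+k}=(-1)^{i+j+k+l}$ (respectively $(-1)^{j+k+1}\cdot(-1)^{i+l+1}$), so the common factor cancels from all three terms; the Zariski-density step then removes the invertibility hypothesis. What your route buys is a first-principles linear-algebra proof requiring only the multiplicativity of the determinant over a field; what the cited bijective proof buys is validity over an arbitrary commutative ring with no division and a combinatorial explanation via a sign-reversing involution, which is closer in spirit to the spanning-tree and spanning-forest arguments in which the paper actually deploys this identity.
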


This identity underlies Dodgson's Method of Condensation, an
algorithm for efficient hand computations of determinants. We are now
in a position to prove the main theorem:

{\bf Proof of Theorem~\ref{thm:2-forest}.}  For this proof, we will
use the symbol $\L_{U,W}$ to be the determinant of the Laplacian
matrix with rows $U$ and columns $W$ removed.

Let us first assume that the red edges $x$ and $y$ do not share a
vertex.  Renumber the vertices so that edge $x$ connects vertices $1$
and $2$ and edge $y$ connects vertices $3$ and $4$.  First note that
any 1-minor determinant is (up to sign) $P(x,y)$.  In particular we
have that
\begin{equation}\label{eq:L13}
  P(x,y) = \mc L_{1,3} = A_{00} + A_{10}x+A_{01}y+A_{11}xy.
\end{equation}
Note that we also have
\begin{equation}
  \mc L_{12,13} + \mc L_{12,23} = A_{10} + A_{11} y 
\end{equation}

and that 
\[
\L_{13,34} + \L_{14,34} = A_{01} + A_{11} x 
\]
and finally that 
\[
\L_{123,134} + \L_{124,134} + \L_{123,234} + \L_{124,234} = A_{11}. 
\]
Some algebra gives that
\begin{equation*}
A_{11} ( A_{00} + A_{10} x + A_{01} y + A_{11} x y ) -
(A_{10} + A_{11} y)(A_{01} + A_{11} x) = A_{11}A_{00} - A_{10}A_{01} =
\Delta,  
\end{equation*}
or
\begin{equation*}
  \Delta = (\L_{123,134} + \L_{124,134} + \L_{123,234} + \L_{124,234})\L_{1,3}-(\mc L_{12,13} + \mc L_{12,23})(\L_{13,34} + \L_{14,34})
\end{equation*}
and applying the Dodgson identity (here $\L_{1,3}$ is the matrix $M$)
gives
\[
\Delta = -\L_{12,34}\left(\L_{13,13} + \L_{14,13}+\L_{13,23}+\L_{14,23}\right)=-(\L_{12,34})^2 
\] 

In the case where $x$ and $y$ share a vertex we number the shared vertex 
$1$ and the other two $2$ and $3$. Then a similar calculation to the one 
above gives 
\[
\Delta = - (\L_{12,13})^2 
\] 
\qed
 
\begin{corollary}\label{cor:auto}
  Assume $\Gamma$ has two disjoint red edges, $x$ and $y$, and denote
  $x=(i,j)$ and $y=(k,l)$.  If there is an automorphism of $\Gamma$
  that exchanges $i,j$ and fixes $k,l$ (or vice versa), then $\Delta =
  0$.
\end{corollary}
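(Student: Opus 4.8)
The plan is to feed the combinatorial formula just established in Theorem~\ref{thm:2-forest} into a sign-reversing, weight-preserving involution coming from the automorphism. Since~\eqref{eq:Delta1} reads
\[
  \Delta = -\left(\sum_{F\in\ft U W}\epsilon(F)\pi(F)\right)^2
\]
with $U=\{i,j\}$ and $W=\{k,l\}$, it suffices to prove that the sum inside the square vanishes. First I would unpack $\epsilon(F)$ in the case $\av U=\av W=2$ using the remark following Chaiken's theorem. Every $F\in\ft U W$ consists of two trees, each carrying exactly one vertex of $U$ and one of $W$, so $F$ lies in exactly one of two classes: those in which $i$ and $k$ share a tree (forcing $j,l$ into the other), and those in which $i$ and $l$ share a tree (forcing $j,k$ into the other). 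By the enumeration convention defining $\epsilon(F)$, these two classes receive opposite signs; writing $i\sim k$ for the condition that $i,k$ lie in a common tree of $F$, the inner sum becomes $\sum_{i\sim k}\pi(F)-\sum_{i\sim l}\pi(F)$.

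Next I would let $\phi$ be the given automorphism, which exchanges $i\leftrightarrow j$ and fixes $k,l$, and let it act on spanning subgraphs of $\Gamma$. Because $\phi$ is an automorphism of the \emph{weighted} graph, it maps edges to edges of equal weight, so $\pi(\phi(F))=\pi(F)$; because it is a bijection of $V$, it sends spanning $2$-forests to spanning $2$-forests and preserves the defining incidence conditions of $\ft U W$. The decisive step is tracking the vertex classes: if $F$ has $i\sim k$ and $j\sim l$, then $\phi(F)$ has $\phi(i)=j\sim\phi(k)=k$ and $\phi(j)=i\sim\phi(l)=l$, i.e.\ $\phi(F)$ satisfies $i\sim l$ and $j\sim k$. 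Thus $\phi$ carries the first class bijectively onto the second (and symmetrically back), while preserving $\pi$.

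Reindexing the first sum by $F\mapsto\phi(F)$ then identifies it termwise with the second, giving
\[
  \sum_{F\in\ft U W}\epsilon(F)\pi(F)=\sum_{i\sim k}\pi(F)-\sum_{i\sim l}\pi(F)=0,
\]
so that $\Delta=0$. The alternative hypothesis, that $\phi$ fixes $i,j$ and exchanges $k,l$, is handled identically after interchanging the roles of $U$ and $W$.

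I do not anticipate a serious obstacle: once the pieces are assembled the argument is a single sign-reversing involution. The only point demanding care is the sign bookkeeping — confirming through the enumeration convention that the two vertex-pairing classes genuinely receive opposite signs, so that the $\phi$-matched terms cancel rather than reinforce. A secondary point worth stating explicitly is that a weighted-graph automorphism preserves edge weights (hence $\pi$) and maps red edges to red edges, which is precisely what keeps $\phi$ acting within $\ft U W$.
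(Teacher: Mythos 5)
Your argument is correct and is exactly the paper's proof, merely spelled out in full: the paper's one-line justification is that the automorphism fixes $\pi(F)$ but negates $\epsilon(F)$, forcing the inner sum in~\eqref{eq:Delta1} to equal its own negative and hence vanish. Your careful tracking of the two pairing classes ($i\sim k$ versus $i\sim l$) and of weight preservation is just the expanded version of that same involution argument.
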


\begin{proof}
  This automorphism fixes $\pi(F)$ but negates $\epsilon(F)$, thus giving $\Delta = -\Delta$.
\end{proof}

\subsection{Cycles and homology}

The expressions derived above give explicit formulae in terms of sums
over spanning trees and spanning 2-forests; we now show that there is
a dual formula in terms of minor determinants of the cycle
intersection form for the homology of the graph $\Gamma$.

\begin{thm}\label{thm:Cauchy-Binet}
  Suppose that the original graph $\Gamma$ has co-rank $c=E-n+1$, and
  that removing the edges $x$ and $y$ does not disconnect $\Gamma$.
  We construct a cycle basis for the graph in the following way: let
  $v_1, v_2, \ldots v_{c-2}$ denote a cycle basis for the graph with
  edges $x$ and $y$ removed, $v_{c-1}$ denote any cycle through $x$
  {but not through} $y$ and $v_c$ denote any cycle through y { but not
    through} $x$.

  In other words the basis should be chosen so that there is a unique
  cycle through each of the edges $x$ and $y$, but the basis is
  otherwise arbitrary.  Next we define the matrix $F$ to be the
  $E\times c$ matrix with rows given by $\{v_i\}_{i=1}^c$. Let $M_i^j$
  denote the minor determinant formed by removing the $i^{th}$ row and
  the $j^{th}$ column of $FF^t$. Then $\Delta = |M_c^{c-1}|^2$.
\end{thm}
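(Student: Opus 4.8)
The plan is to prove the identity by a Cauchy--Binet expansion of the cofactor $M_c^{c-1}$ and to match it against the signed $2$-forest sum that already controls $\Delta$. Recall that Theorem~\ref{thm:2-forest} (via Chaiken) gives
\begin{equation*}
  \Delta = -\left(\sum_{F\in\ft U W}\epsilon(F)\pi(F)\right)^2 = -(\L_{U,W})^2,
\end{equation*}
where $U,W$ are the endpoint pairs of the red edges $x,y$. Since the right-hand side is a square, it suffices to identify $M_c^{c-1}$ with $\L_{U,W}$ up to a single global sign (the overall sign in~\eqref{eq:Delta1} being fixed by the orientation convention used to build $F$). I would first realize the cofactor as a genuine $(c-1)\times(c-1)$ Gram determinant, $M_c^{c-1}=\det\big(F_{\hat c}\,F_{\widehat{c-1}}^{\,t}\big)$, where $F_{\hat c}$ (resp.\ $F_{\widehat{c-1}}$) is $F$ with its row $v_c$ (resp.\ $v_{c-1}$) deleted, and then apply Cauchy--Binet to get
\begin{equation*}
  M_c^{c-1}=\sum_{S}\det\big((F_{\hat c})_S\big)\,\det\big((F_{\widehat{c-1}})_S\big),
\end{equation*}
the sum running over $(c-1)$-element sets $S$ of edges (columns).

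The key structural input is the special cycle basis. Since $v_1,\dots,v_{c-2}$ avoid both $x$ and $y$, while $v_{c-1}$ avoids $y$ and $v_c$ avoids $x$, the column of $F_{\hat c}$ indexed by $y$ vanishes and the column of $F_{\widehat{c-1}}$ indexed by $x$ vanishes; hence only $S\subseteq E\setminus\{x,y\}$ contribute. Moreover $\langle v_1,\dots,v_{c-1}\rangle$ is exactly the cycle space of $\G\setminus\{y\}$ and $\langle v_1,\dots,v_{c-2},v_c\rangle$ that of $\G\setminus\{x\}$, so $F_{\hat c}$ and $F_{\widehat{c-1}}$ are integral cycle-basis matrices of these two graphs. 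By the standard cographic-matroid interpretation of minors of a cycle matrix, $\det\big((F_{\hat c})_S\big)\neq 0$ iff $R:=E\setminus S$ satisfies $R\setminus\{y\}\in\ST(\G)$ with $x\in R$, and $\det\big((F_{\widehat{c-1}})_S\big)\neq 0$ iff $R\setminus\{x\}\in\ST(\G)$ with $y\in R$. A set $R$ of $n$ edges from which removing either $x$ or $y$ leaves a spanning tree is precisely a connected unicyclic subgraph whose unique cycle passes through both $x$ and $y$; equivalently $F:=R\setminus\{x,y\}$ is a spanning $2$-forest lying in $\ft U W$. This gives a bijection between the contributing sets $S$ and the forests counted in~\eqref{eq:Delta1}.

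With the bijection in hand, the remaining task is the sign count: for each contributing $S\leftrightarrow F$ I must show $\det\big((F_{\hat c})_S\big)\det\big((F_{\widehat{c-1}})_S\big)$ equals $\epsilon(F)$ up to one global sign independent of $F$. Each factor is $\pm 1$ (the basis being unimodular) and records how the chosen cycles are expressed in the fundamental-cycle basis of the associated spanning tree, and the product should collapse to the permutation sign $\epsilon(F)$ of Chaiken's theorem. Establishing this cleanly is the main obstacle. I expect the most reliable route is to replace the term-by-term computation with a single complementary-minor (Jacobi/Desnanot) identity applied to the invertible $E\times E$ matrix obtained by stacking a reduced incidence matrix $B_0$ on top of $F$: because the cut and cycle spaces are orthogonal ($B_0F^t=0$), this identity converts the Laplacian minor $\L_{U,W}$ directly into the complementary minor $M_c^{c-1}$ of $FF^t$, carrying the signs automatically.

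Summing over $S$ (or reading off the Jacobi identity) then yields $M_c^{c-1}=\pm\,\L_{U,W}$, and squaring gives $\Delta=-(\L_{U,W})^2=-\av{M_c^{c-1}}^2$, i.e.\ the stated identity up to the sign convention in the intersection form. Two technical points I would verify along the way: that the hypothesis ``removing $x,y$ does not disconnect $\G$'' indeed forces $\G\setminus\{x\}$, $\G\setminus\{y\}$ connected so the asserted co-ranks (and the $(c-2)$-element basis $v_1,\dots,v_{c-2}$) are correct; and that the case where $x$ and $y$ share a vertex is handled by the same expansion, matching the second formula $\Delta=-(\L_{12,13})^2$ from Theorem~\ref{thm:2-forest}. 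The weighted analogue should follow identically after replacing $FF^t$ by the weighted intersection form and tracking the edge weights through Cauchy--Binet.
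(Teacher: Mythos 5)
Your skeleton is the same as the paper's: expand the cofactor of $FF^t$ by Cauchy--Binet over $(c-1)$-element edge sets $S$, observe that the special basis kills every $S$ containing $x$ or $y$, and identify the surviving terms with connected unicyclic spanning subgraphs $E\setminus S$ whose unique cycle passes through both $x$ and $y$, i.e.\ with the spanning $2$-forests in $\ft UW$. That part of your argument matches Lemma~\ref{lem:facts} (first bullet) and the body of the paper's proof essentially verbatim, including the realization $M_c^{c-1}=\det\bigl(F_{\hat c}F_{\widehat{c-1}}^{\,t}\bigr)$. Where you diverge is exactly the step you flag as ``the main obstacle'': the relative sign of $\det\bigl((F_{\hat c})_S\bigr)\det\bigl((F_{\widehat{c-1}})_S\bigr)$ across different $S$. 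The paper resolves this head-on in the second bullet of Lemma~\ref{lem:facts}: it forms the $c\times(c+1)$ matrix $G$ of columns indexed by $S\cup x\cup y$, exhibits an explicit kernel vector supported on the cut separating the two components $A,B$ of $\G$ with $S\cup x\cup y$ removed, and uses it to perform column operations converting one minor into the other; the sign flip is then read off from whether the unique cycle of $\G$ minus $S$ traverses $x$ and $y$ in the same or opposite sense, which is what matches Chaiken's permutation sign $\epsilon(F)$. Your proposed alternative --- a Jacobi/Desnanot complementary-minor identity applied to the reduced incidence matrix stacked on $F$, using $B_0F^t=0$ to pass directly from $\L_{U,W}$ to $M_c^{c-1}$ --- is a legitimate and arguably cleaner global route (it is the cut-space/cycle-space duality underlying Sjogren's theorem), but as written it is only a plan: the complementary-minor identity transfers minors with a controlled sign only if the stacked $E\times E$ matrix is unimodular, and verifying that (it amounts to the integrality and total unimodularity of the chosen cycle basis together with the incidence matrix) is essentially the same work the paper buries in the row-operation argument. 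So: same decomposition and same combinatorial bijection, a genuinely different but unexecuted proposal for the sign bookkeeping; neither your sketch nor the paper fully spells out the final matching of the traversal sign with $\epsilon(F)$, and you are right to also note the global sign tension between $\Delta=|M_c^{c-1}|^2$ here and $\Delta=-(\L_{12,34})^2$ in Theorem~\ref{thm:2-forest}, which the paper leaves unreconciled.
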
 

\begin{proof}
  The proof here is analogous to a similar result due to
  Sjogren\cite[Theorem 1]{Sjogren.91} that shows that the determinant 
of $FF^t$ is the number of spanning trees of the graph. The proof here is
  similar in flavor, except that, as we are dealing with a minor determinant, 
 the basis must be chosen in the particular way described above.

 Having chosen the basis as above, the main idea is the Cauchy--Binet
 formula\cite[\S0.8.7]{Horn.Johnson.13}.  We will have to consider
 various submatrices of $F$. Superscripts will denote deleted rows of
 the matrix (at most one row will be deleted), and subscripts will
 denote the retained columns. For instance if $S$ is a subset of the
 edges then $F^{c-1}_S$ will denote the submatrix obtained from $F$ by
 deleting the $(c-1)^{st}$ row (corresponding to the cycle through
 edge $x$) and deleting all columns not in $S$.

 Then the Cauchy--Binet formula~\cite[\S0.8.7]{Horn.Johnson.13} gives
 that $|M_c^{c-1}| = \sum_{S} |F^{c-1}_{S}| |F^c_{S}|,$ where the sum
 is over all subsets $S$ of the edge set $E$ of size $c-1$.

Using Lemma~\ref{lem:facts}, the remainder of the proof is
straightforward: $|F^{c-1}_{S}| |F^{c}_{S}|$ unless $\Gamma/(S\cup y)$
and $\Gamma/(S\cup x)$ are both trees. This is clearly equivalent to
$\Gamma/(S\cup y \cup x)$ being a spanning 2-forest where $x$ and $y$
each have one vertex in each component, so $|M_c^{c-1}|$ reduces to a
signed sum over spanning 2-forests with this property, with the sign
given as in the theorem.
\end{proof}

\begin{lem}\label{lem:facts}
  Using the notational conventions established in the statement and
  proof of Theorem~\ref{thm:Cauchy-Binet}, we have
\begin{itemize}
\item $ |F^{c-1}_{S}|=0,\pm1.$ The determinant is zero unless $\Gamma/(S\cup y)$ is a tree, in which case $ |F^{c-1}_{S}|=0,\pm1,$  and similarly for $ |F^{c}_{S}|=0$ .
\item  In the case where   $E/(S\cup y)$ and  $E/(S\cup x)$ are both trees then 
the sign of $ |F^{c-1}_{S}| |F^{c}_{S}|$ is determined as follows: $E/S$ is a graph with 
a unique (up to sign) cycle that contains edges ``x'' and ``y''. If this 
cycle traverses edges ``x'' and ``y'' in the same sense then $ |F^{c-1}_S| |F^{c}_{S}|=1$, otherwise 
$ |F^{c-1}_{S}| |F^c_{S}|=-1.$ Note that this sign is 
independent of the orientation assigned to edges ``x'' and ``y'' and the 
direction of the cycle through them.   
\end{itemize} 
\end{lem}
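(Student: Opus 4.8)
The plan is to translate every maximal minor of the cycle matrix $F$ into a statement about spanning trees of $\Gamma$, and then to extract the sign of the product $|F^{c-1}_S|\,|F^c_S|$ from a single homological relation. Throughout I take the cycle basis to be integral, i.e.\ a $\Z$-basis of the cycle lattice $\ker\partial\cap\Z^E$ (a fundamental system of cycles works), with $v_{c-1}$ and $v_c$ \emph{simple} cycles meeting $x$ but not $y$, resp.\ $y$ but not $x$. This is the only point at which the word ``arbitrary'' in Theorem~\ref{thm:Cauchy-Binet} must be qualified, and it is exactly what forces the minors to be $0,\pm1$.

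For the first bullet, delete the row $v_{c-1}$. The remaining $c-1$ cycles $v_1,\dots,v_{c-2},v_c$ avoid the edge $x$, and since $\Gamma\setminus x$ has co-rank $c-1$ while $v_{c-1}$ meets $x$ exactly once, they form a $\Z$-basis of the cycle lattice of $\Gamma\setminus x$; thus $F^{c-1}$ is the cycle matrix of $\Gamma\setminus x$. For a column set $S$ with $|S|=c-1$, the minor $|F^{c-1}_S|$ is nonzero iff the projection of this cycle space to the $S$-coordinates is injective, i.e.\ iff the complementary $n-1$ edges $(E\setminus\{x\})\setminus S = E\setminus(S\cup\{x\})$ carry no cycle and hence form a spanning tree $T_x$. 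In that case $S$ is precisely the chord set of $T_x$ in $\Gamma\setminus x$, so the fundamental cycle basis of $T_x$ is a second $\Z$-basis of the same lattice whose $S$-minor is $\pm1$; comparing the two $\Z$-bases gives $|F^{c-1}_S|=\pm1$. The identical argument, deleting $v_c$ and removing $y$, shows $|F^c_S|\in\{0,\pm1\}$ with nonvanishing exactly when $E\setminus(S\cup\{y\})$ is a spanning tree.

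For the sign I would use that the null vector of the wide $(c-1)\times c$ matrix $(F_S)^t$ is the vector of signed maximal minors: $b$ with $b_i=(-1)^i|F^i_S|$ satisfies $\sum_i b_i (v_i)_s=0$ for all $s\in S$. Hence $w:=\sum_i b_i v_i$ is a cycle supported on $E\setminus S$. When both minors are nonzero we have $x,y\notin S$ and $E\setminus S$ is a spanning tree together with one extra edge, so it carries a \emph{unique} cycle $C$, which — since deleting either $x$ or $y$ restores a tree — runs through both $x$ and $y$. Writing $z\in\{0,\pm1\}^E$ for the oriented indicator of $C$, we get $w=\mu z$; and because $w_x=\pm|F^{c-1}_S|=\pm1$, the vector $w$ is primitive, so $\mu=\pm1$. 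Reading the identity $w=\mu z$ in the $x$- and $y$-coordinates, where only $v_{c-1}$ contributes to $x$ and only $v_c$ to $y$, collapses it to $(-1)^{c-1}|F^{c-1}_S|(v_{c-1})_x=\mu z_x$ and $(-1)^{c}|F^c_S|(v_c)_y=\mu z_y$. Multiplying these and using $\mu^2=1$ together with $(v_{c-1})_x,(v_c)_y,z_x,z_y\in\{\pm1\}$ yields
\[
  |F^{c-1}_S|\,|F^c_S| \;=\; -\big(z_x(v_{c-1})_x\big)\big(z_y(v_c)_y\big).
\]
Each bracket is $\pm1$ according to whether $C$ traverses $x$ (resp.\ $y$) with or against the basis cycle through it, so the product records exactly whether $C$ threads $x$ and $y$ ``in the same sense.'' It is manifestly invariant under reversing the reference orientation of $x$ or of $y$ (both factors in a bracket flip) and under reversing the direction of $C$ (both $z_x$ and $z_y$ flip), which is the claimed orientation-independence.

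The step I expect to be the real obstacle is this last sign bookkeeping. Two things must be checked with care: that $\mu=\pm1$ rather than a larger integer — this is precisely the primitivity forced by $|F^{c-1}_S|=1$, and the reason the basis must be integral — and that the displayed product is a well-defined function of the graph once the basis cycles $v_{c-1},v_c$ are fixed, so that the ``same sense / opposite sense'' dichotomy is unambiguous after one checks that each admissible reorientation flips an even number of sign factors. By contrast, the identification of $F^{c-1},F^c$ with the cycle matrices of $\Gamma\setminus x,\Gamma\setminus y$ and the spanning-tree characterization of their nonvanishing are routine consequences of the integral cycle-lattice structure.
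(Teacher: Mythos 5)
Your proof is correct, and for the crucial second bullet it takes a genuinely different route from the paper. For the first bullet both arguments rest on the same two facts (a cycle in the complement of $S$ kills the determinant; a fundamental cycle basis of the complementary spanning tree makes the minor a permutation matrix, and any two integral cycle bases differ by a unimodular matrix), though you apply them directly to $F^{c-1}$ viewed as a cycle matrix of $\Gamma\setminus x$, whereas the paper works with the full $c\times c$ matrix $F_{S\cup x}$ and extracts $F^{c-1}_S$ by a cofactor expansion in the row of $v_{c-1}$ --- a cosmetic difference, but do spell out why your $c-1$ cycles generate the full cycle lattice of $\Gamma\setminus x$ and not merely a finite-index sublattice (it is because $(v_{c-1})_x=\pm1$, so the $x$-coordinate of any cycle determines its $v_{c-1}$-coefficient). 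For the sign, the paper argues on the \emph{cut} side: it forms the $c\times(c+1)$ matrix of columns $S\cup x\cup y$, exhibits the signed cut indicator of the two components of $\Gamma\setminus(S\cup x\cup y)$ as its kernel vector, and uses the resulting column relation to compare $|F^{c-1}_S|$ with $|F^c_S|$. You argue on the dual \emph{cycle} side: the vector of signed maximal minors of $F_S$ is a left null vector, so $\sum_i(-1)^i|F^i_S|\,v_i$ is a cycle supported on $E\setminus S$, hence $\pm$ the unique fundamental cycle $C$ there, and reading off the $x$- and $y$-coordinates yields the closed formula $|F^{c-1}_S|\,|F^c_S|=-\bigl(z_x(v_{c-1})_x\bigr)\bigl(z_y(v_c)_y\bigr)$. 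What your version buys is precision: the product genuinely changes sign if one reverses the orientation of $v_{c-1}$ or $v_c$ as a basis element (negating a row of $F$ negates exactly one of the two minors), so the sign cannot be a function of the cut-crossing pattern of $x$ and $y$ alone, as the lemma's ``same sense'' phrasing and the paper's closing sentence suggest; your formula records the necessary dependence on the reference cycles explicitly, which is exactly the consistent relative-sign information across all $S$ that the Cauchy--Binet sum in Theorem~\ref{thm:Cauchy-Binet} requires. A small low-dimensional check (e.g.\ the theta graph with $x,y$ two of three parallel edges) confirms your formula and is worth including.
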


\begin{proof}
  Let $c=E-N+1$ be the co-rank of the graph.  We first note the
  following: if $\tilde S$ is a set of $c$ edges of $\Gamma$ then
  $|F_{\tilde S}|=\pm1$ if $\Gamma/\tilde S$ forms a tree and
  $|F_{\tilde S}|=0$ otherwise. This is proven by
  Sjogren~\cite{Sjogren.91}, but for the convenience of the reader we
  give a quick proof here. First, to see that $|F_{\tilde S}|=0$ if
  $\Gamma/\tilde S$ is not a tree we give an explicit element of the
  kernel.  Since $\Gamma/\tilde S$ is not a tree it must contain a
  cycle. Express this cycle in terms of the basis as $\sum \alpha_i
  v_i$.  Thus $\vec \alpha F_{\tilde S}$ gives the edges of this cycle
  lying in $\tilde S$. Since this cycle lies entirely in the
  complement, this is zero.  

  To see that $|F_{\tilde S}|=\pm1$ when $\Gamma/\tilde S$ is a tree
  $T$ we first consider the case where the basis is chosen in a
  special way: namely for each edge $i$ in $\tilde S$ we consider the
  unique cycle in $T \cup {i}$. This gives a cycle basis. In this
  particular basis the matrix $F_{\tilde S}$ is a permutation matrix,
  since each edge in $\tilde S$ lies in a unique cycle.  Now {\em any}
  integral basis is related to this special basis by a matrix $U$ with
  $|U|=\pm 1$, and the corresponding matrices are related by $\tilde
  F_{\tilde S}= U F_{\tilde S}.$ Therefore for any basis $|F_{\tilde
    S}|=\pm1.$

  We are interested in the $(c-1)\times(c-1)$ matrices $F^{c}_{S}$ and
  $F^{c-1}_{S}$, where $S$ is now a subset of $c-1$ edges, but these
  matrices can be realized as minors of a matrix $F_{\tilde S}$ of the
  above form. It is clear that if $S$ contains the edge $x$ then
  $|F^{c-1}_S|=0$, since we have a row of all zeros, and similarly for
  $|F^{c}_S|$. Since we are only interested in the case where the
  product is non-zero we can therefore just consider the the case
  where $S$ does not contain either $x$ or $y$.  By the above the
  matrix $|F_{S\cup y}|=\pm1$ if $\Gamma/(S\cup y)$ is a tree, and
  zero otherwise.  A minor expansion in the row corresponding to $y$
  has a single entry of $1$ (in the $(c,c)$ position), and the rest of
  the entries are zero. The minor corresponding to the sole nonzero
  entry is $F^{c}_S$, giving $|F^{c}_S|=|F_{S\cup y}|=\pm1$. Similarly
  we have that $|F^{c-1}_S|=-|F_{S\cup x}|=\pm1$, the minus sign
  arising since the nonzero entry is in the $(c-1,c)$ position.

  To evaluate the relative sign between $|F^{c}_{S}|$ and
  $|F^{c}_{S}|$ we give a series of elementary row operations to
  reduce one to the other.  We let $S$ be an edge subset as above and
  $G$ be the $c\times (c+1)$ matrix consisting of the columns of $F$
  from $S\cup y\cup x$.  Since $G$ has a $c\times c$ submatrix of full
  rank it has a one dimensional kernel, which can be expressed
  naturally in terms of the cut-space: Removing the edges $S\cup x
  \cup y$ from the graph gives a graph with two components, denoted
  $A$ and $B$. For each edge in $S\cup x \cup y$ we associate the
  following vector $w$: if the edge points from $A$ to $B$ we assign
  the corresponding entry of $w$ to be $+1$, if the edge points from
  $B$ to $A$ we assign $-1$, otherwise we assign the entry $0$.  This
  vector must be in the kernel of $G$, since each entry of $G w$ gives
  the number of times that the cycle leaves $A$ minus the number of
  times that it leaves $B$. This allows us to express the $c+1$ column
  of $G$ in terms of the first $c$ columns of $G$, and thus gives a
  sequence of elementary row operations to reduce $|F^{c-1}_{S}|$ to
  $|F^{c}_{S}|$. The only one which influences the sign of the
  determinant $G$ is the orientation of the edges $x$ and $y$ relative
  to the components $A$ and $B$: if the edges $x$ and $y$ both point
  from $A$ to $B$, or vice verse, then $|F^{c-1}_{S}|=-|F^c_{S}|,$ if
  one edge points from $A$ to $B$ and the other $B$ to $A$
  then$|F^{c-1}_{S}|=|F^c_{S}|.$ Equivalently $\Gamma/S$ has a cycle
  containing both edges $x$ and $y$.  If this cycle traverses both
  edges in the same sense the determinants have the same sign,
  otherwise they have opposite signs.

\end{proof}

\begin{example}
\label{exa:complete}
One graph for which it is straightforward to compute everything is the
complete graph, $K_n$. We consider the case where the graph has two
red edges ($x$ and $y$) and the rest of the edges have weight $1$.  In
this case it is clear that there are two topologically distinct
situations, the case where the two edges share a vertex and the case
where they do not. In the case where the two red edges share a vertex
it is straightforward though somewhat tedious to check that the
polynomial is given by
\[
P_{K_n}(x,y) = 3 n^{n-4} x y + (2n-3)n^{n-4} x + (2n-3)n^{n-4} y + (n-1)(n-3)n^{n-4}
\]
and the discriminant is then $\Delta_{K_n} = n^{2n-6}$.

In the second case, where the edges do not share a vertex, the polynomial 
is given by 
\[
P_{K_n}(x,y) = 4 n^{n-4} x y + (2n-4)n^{n-4} x + (2n-4)n^{n-4} y + (n-2)^2n^{n-4}
\]
and the corresponding discriminant vanishes: $\Delta_{K_n} = 0$.  (Of
course, this vanishing is implied by Corollary~\ref{cor:auto}; there
are many automorphisms of the complete graph which flip one red edge
and leave the other fixed.)
\end{example}

\section{Multiple eigenvalue crossings --- $R>2$}\label{sec:R>2}

\newcommand{\w}{\smiley}

Let us now consider the case where there are $R$ red edges, $R>2$.  As
above, the crossing polynomial~\ref{eq:stpoly} is of the form
\begin{equation*}
  \M(\G) = \sum_{I\in 2^R} \ssi I (\Gamma) x^I.
\end{equation*}
We will find it convenient to index subsets of $2^R$ by the
corresponding binary sequence, and we will write $A_I = \ssi I
(\Gamma)$ for brevity.

The variety $Z(\G) := \{x\in\R^R\colon M(\G)(x) = 0\}$ can be
reducible in many different combinations, but here we concern
ourselves only with the maximal notion of reducibility, i.e. we
consider the case when $Z(\G)$ can be written as the union of
hyperplanes and the crossing polynomial
can be written as a product of linear factors.

It is clear that this is a very non-generic case: a general 
polynomial $M(\Gamma)$ is determined by $2^R$ different coefficients, 
whereas an $M(\Gamma)$ that factors is determined by $R+1$ different
coefficients. This suggests that a necessary and sufficient condition 
for $M(\Gamma)$ to factor should be the vanishing of $2^R-R-1$ 
functions of the coefficients. The purpose of this section is to prove 
this result, and give a nice description of the $2^R-R-1$ discriminants 
whose vanishing is equivalent to factorization of $M(\Gamma).$

\begin{define}
  A {\bf wildcard} sequence is any binary sequence on $2^R$ with any
  two of the digits replaced by $\w$'s.  Let $w$ be such a wildcard,
  and it corresponds with a set of four binary sequences in $2^R$ in
  the canonical way: simply replace the two $\w$'s by the four
  sequences $\{00,01,10,11\}$, call this set $S_w \subseteq 2^R$.  For
  any wildcard $w$, we define
  \begin{equation*}
    P_w(x) = \sum_{b\in S_w} A_b x^b.
  \end{equation*}
  We also define $\Delta_w$, the discriminant of $P_w$, in the obvious
  manner: it is the product of the coefficients of the highest and
  lowest order terms, minus the product of the coefficients of the
  terms of middle order.  
\end{define}

\begin{example}
  If $R=5$ and $w = 00\w1\w$, then 
\begin{equation*}
  P_w(x) = A_{00010}x_4 + A_{00011}x_4x_5 + A_{00110}x_3x_4 + A_{00111}x_3x_4x_5,
\end{equation*}
and
\begin{equation*}
  \Delta_w = A_{00010}A_{00111} - A_{00011}A_{00110}.
\end{equation*}
\end{example}

The main result of this section is that the crossing polynomial
factors into a product of linear factors if and only if ``enough'' of
the reduced discriminants are zero, and this corresponds to the
combinatorial computations of the previous section in a
straightforward manner.  We first show one direction of this theorem:

\begin{prop}
  If $\M(\G)$ factors into $R$ linear factors, then $\Delta_w = 0$ for
  any wildcard $w$.
\end{prop}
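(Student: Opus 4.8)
The plan is to exploit the fact that $\M(\G)$ is \emph{multilinear}: each variable $x_k$ occurs to degree at most one, since any spanning tree either contains a given red edge or does not, so every $A_I = \ssi I(\Gamma)$ is the coefficient of the squarefree monomial $x^I$. The first step is to pin down the structure of any factorization into $R$ linear factors. Write $\M(\G) = \prod_{m=1}^R \ell_m$ with each $\ell_m$ a nonconstant affine-linear form, and let $L_m$ be its homogeneous linear part. The degree-$R$ part of the product is $\prod_m L_m$, and since $\M(\G)$ is multilinear of degree $R$ this must equal its top monomial $A_{1\cdots1}\,x_1\cdots x_R$. Because $\C[x_1,\dots,x_R]$ is a UFD in which $x_1\cdots x_R$ factors uniquely into the irreducibles $x_k$, each $L_m$ is a nonzero scalar multiple of a single, distinct variable. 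After relabeling the factors this forces
\begin{equation*}
  \M(\G) = \prod_{k=1}^R (\alpha_k x_k + \beta_k), \qquad \alpha_k \neq 0.
\end{equation*}

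Expanding this product and matching against $\M(\G) = \sum_{I\in 2^R} A_I x^I$, I would read off that the coefficients factor coordinatewise:
\begin{equation*}
  A_I = \prod_{k\in I}\alpha_k \prod_{k\notin I}\beta_k.
\end{equation*}
This is the key structural fact, and everything else is a one-line computation. I would then fix an arbitrary wildcard $w$ with free positions $i,j$ and prescribed values on the remaining coordinates. The four sequences of $S_w$ agree off $\{i,j\}$, so their coefficients share a common factor $C$ coming from the prescribed coordinates (namely the product of the appropriate $\alpha_k$ or $\beta_k$ for $k\neq i,j$), and on the free positions one gets $A_{00}=C\beta_i\beta_j$, $A_{10}=C\alpha_i\beta_j$, $A_{01}=C\beta_i\alpha_j$, and $A_{11}=C\alpha_i\alpha_j$ in the evident shorthand. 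Substituting into the discriminant gives
\begin{equation*}
  \Delta_w = A_{00}A_{11} - A_{01}A_{10} = C^2\alpha_i\alpha_j\beta_i\beta_j - C^2\alpha_i\alpha_j\beta_i\beta_j = 0,
\end{equation*}
as claimed.

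I expect the only genuine subtlety to be the structural step, namely justifying that each linear factor is supported on a single variable. The cleanest route is the unique-factorization argument applied to the top-degree homogeneous part, as above; a more hands-on alternative is to observe directly that if two factors shared a variable $x_k$, then $\prod_m L_m$ would contain $x_k^2$ and could not reduce to the squarefree monomial $x_1\cdots x_R$, contradicting multilinearity. Once the product form $\M(\G)=\prod_k(\alpha_k x_k + \beta_k)$ is in hand, the vanishing of every $\Delta_w$ is an immediate consequence of the coordinatewise factorization of the coefficients, so no further obstacle arises.
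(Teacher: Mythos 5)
Your proof is correct and follows essentially the same route as the paper: reduce to the coordinatewise product form $\M(\G)=\prod_k(\alpha_k x_k+\beta_k)$, read off that $A_I$ factors coordinate by coordinate, and observe that each $\Delta_w$ then vanishes identically. The only difference is that you explicitly justify (via multilinearity and unique factorization of the top-degree part) that each linear factor must be supported on a single distinct variable, a structural step the paper simply asserts when writing $\M(\G)=\alpha\prod_i(C_ix_i+1)$.
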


\begin{proof}
  We assume that the crossing polynomial factors, so we factor it and
  then pull out all of the constants in the following manner:
  \begin{equation*}
    \M(\G)(x) = \alpha \prod_{i=1}^R (C_i x_i + 1).
  \end{equation*}
  We think of $C = (C_1,\dots C_R)$ as a vector, and given any vector
  $p=(p_1,\dots, p_R)$, we write
  \begin{equation*}
    C^p = C_1^{p_1}\cdot C_2^{p_2}\cdots C_R^{p_R}.
  \end{equation*}
  From this notation, we see that for any $b\in 2^R$, $A_b = \alpha
  C^b$.

  Let $w$ be a wildcard, and denote the positions of the $\w$'s by
  $i<j$.  Let $b$ be the binary sequence corresponding to $w$ where we
  replace both $\w$'s by $0$'s, and then the four binary sequences
  correspond to $w$ are $\{b, b+e_i, b+e_j, b+e_i+e_j\}$, where $e_i$
  are the standard basis vectors.  Then
  \begin{equation*}
    \Delta_w = C^{b+e_i+e_j}C^b - C^{b+e_i}C^{b+e_j}
  \end{equation*}
  and this is clearly zero.
\end{proof}

\begin{remark}
The previous proposition shows that a complete factorization of $M(\Gamma)$ 
implies that $\binom{R}{2} 2^{R-2}$ discriminants vanish. While it is not 
hard to see that the converse is true, we can prove something stronger.  
We expect that not all of these discriminants are independent, and the naive 
count suggests that we need only $2^R-R-1$ conditions in order to guarantee 
that $M(\Gamma)$ factors.  In the next part we define a subset of $2^R-R-1$ 
whose vanishing guarantees that  $M(\Gamma)$ factors.

\end{remark}
\begin{define}
  For any $R$, we define $W_R$ (a stacked deck) to be the set of all 
  wildcards of length $R$ satisfying the following properties:
  \begin{itemize}
  \item The two $\w$'s may be placed in any positions.
  \item All of the bits {\em before} the second $\w$ must be zero. 
  \item The bits {\em after} the second $\w$ are unconstrained: they
    may be either $0$ or $1$.
  \end{itemize}
\end{define}

\begin{prop}\label{prop:wildcard}
  The set $W_R$ has $2^R-R-1$ elements.
\end{prop}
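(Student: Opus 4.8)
The plan is to count $W_R$ by producing a bijection with the family of subsets of $[R]$ of size at least two. Since
$$\sum_{k=2}^{R}\binom{R}{k} = 2^R - \binom{R}{0} - \binom{R}{1} = 2^R - R - 1,$$
establishing such a bijection immediately gives the claimed cardinality. The reason this is the natural target is that a wildcard in $W_R$ carries exactly the data of two distinguished ``first'' slots (the $\w$'s) plus a subset of the trailing slots, which reorganizes cleanly into a single set of size $\ge 2$.

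First I would record what specifies a wildcard $w\in W_R$: the two positions $i<j$ of the $\w$'s together with the bits of $w$. The defining clauses of $W_R$ force every bit strictly to the left of position $j$ (other than position $i$, which carries a $\w$) to vanish, while the bits at positions $j+1,\dots,R$ are free; thus $w$ is determined by the pair $i<j$ and an arbitrary string in $\{0,1\}^{R-j}$. Next I would define $\Phi\colon W_R\to 2^{[R]}$ by
$$\Phi(w) = \{i,j\}\cup\{\, k>j : \text{the bit of $w$ at position $k$ is }1\,\},$$
so that $|\Phi(w)|\ge 2$. The key point is that $i$ and $j$ are precisely the two smallest elements of $\Phi(w)$: by the zero-to-the-left constraint every element of $\Phi(w)$ other than $i$ is at least $j$, while $i<j$. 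This lets us recover the wildcard slots from $\Phi(w)$. For the inverse, given $T\subseteq[R]$ with $|T|\ge 2$ I would take $i<j$ to be its two least elements, place $\w$'s at $i$ and $j$, put a $1$ at each position of $T\setminus\{i,j\}$ (all of which exceed $j$), and $0$ elsewhere; one checks this lies in $W_R$ and that the two constructions are mutually inverse, so $\Phi$ is a bijection and $|W_R| = 2^R-R-1$.

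I do not anticipate a genuine obstacle here, as the argument is elementary; the only point requiring care is the correct reading of the definition of $W_R$, namely that the ``all bits before the second $\w$ are zero'' clause is exactly what forces the two wildcard slots to coincide with the two smallest elements of $\Phi(w)$, which is what makes $\Phi$ invertible. As a cross-check, or as a self-contained alternative, one can count directly: for fixed wildcard positions $i<j$ there are $2^{R-j}$ admissible completions, whence
$$|W_R| = \sum_{1\le i<j\le R} 2^{R-j} = \sum_{j=2}^{R}(j-1)\,2^{R-j},$$
and a routine evaluation of this sum (for instance via the identity for $\sum_k k x^k$) also yields $2^R-R-1$, in agreement with the bijective count.
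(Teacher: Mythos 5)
Your argument is correct, and your primary route is genuinely different from the paper's. The paper counts $W_R$ directly: for each choice of the slot $j$ of the second wildcard symbol there are $j-1$ choices for the first and $2^{R-j}$ free completions of the trailing bits, giving $|W_R|=\sum_{j=2}^{R}(j-1)2^{R-j}$, which is then evaluated to $2^R-R-1$. You instead exhibit a bijection $\Phi$ from $W_R$ to the subsets of $[R]$ of size at least two, sending a wildcard to its two wildcard positions together with the positions of its trailing $1$ bits; the count $\sum_{k=2}^{R}\binom{R}{k}=2^R-R-1$ then falls out with no summation identity needed. The point you correctly identify as the crux --- that the ``all bits before the second wildcard are zero'' clause forces the two wildcard slots to be the two least elements of $\Phi(w)$, so the map is invertible --- is exactly what makes the bijection work, and it is easy to check on the paper's example $W_3=\{\w\w1,\w\w0,\w0\w,0\w\w\}$. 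What your approach buys is a structural explanation of \emph{why} the answer is $2^R-R-1$ (it is the number of subsets of $[R]$ excluding those of size $0$ or $1$), rather than an identity to be verified; what the paper's approach buys is a decomposition by the position of the second wildcard that mirrors the recursion used later in Lemma~\ref{lem:wcrecursion}. Your closing cross-check is, in fact, precisely the paper's proof, so you have both arguments in hand.
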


\begin{proof}
  Let $w$ be a wildcard where the second $\w$ is in slot $j$.  Then the
  first $\w$ can be chosen in any of $j-1$ positions, and all of the
  entries after $j$ are free.  Thus there are $2^{R-j}(R-1)$ wildcards
  where the second $\w$ is in the $j$th slot, and therefore
  \begin{equation*}
    \av{W_R} = \sum_{j=2}^R (j-1) 2^{R-j} = 2^R-R-1.
  \end{equation*}
\end{proof}

\begin{example}
For $R=3$, we have  $|W_3|=8-3-1=4$. The elements of $W_3$ are 
\begin{equation}
  W_3 = \{\w\w 1 , \w\w0 , \w0\w, 0\w\w\}.
\end{equation}
For $R=4$, we have $|W_4|=11$. The elements of $W_4$ are 
\begin{equation}
  W_4=\{\w\w11 , \w\w10, \w\w01, \w\w00, \w0\w0, \w0\w1, \w00\w, 0\w\w0, 0\w\w1, 0\w0\w, 00\w\w\}
\end{equation}
\end{example}

\begin{lem}\label{lem:wcrecursion}
  Every sequence in $W_R$ is of two types:
  \begin{itemize}
  \item it is of the form $w0$ or $w1$ for some $w\in W_{R-1}$,
  \item it is length $R$, it ends with a $\w$, and all non-$\w$
    entries are zeros.
  \end{itemize}
\end{lem}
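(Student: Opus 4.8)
The plan is to prove the lemma by a direct case analysis on the final symbol (position $R$) of a sequence $s\in W_R$. Since $s$ is a wildcard of length $R$, its last entry is either an ordinary bit ($0$ or $1$) or one of the two $\w$'s, and these two cases are exhaustive and mutually exclusive. I will show that the first case produces sequences of the form $w0$ or $w1$ with $w\in W_{R-1}$, and that the second produces sequences of the second type described in the statement.

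First I would handle the case where the last entry of $s$ is a bit. Write $s = wb$ where $b\in\{0,1\}$ and $w$ is the length-$(R-1)$ prefix. Because the deleted symbol $b$ is not a $\w$, both $\w$'s of $s$ lie among positions $1,\dots,R-1$, so $w$ is again a wildcard, now of length $R-1$. The key step is to verify that $w\in W_{R-1}$: the position $p$ of the second (rightmost) $\w$ is unaffected by the deletion, and every bit in positions $1,\dots,p-1$ of $w$ is exactly the corresponding bit of $s$, which the hypothesis $s\in W_R$ forces to be $0$. Hence $w$ satisfies the defining constraint of $W_{R-1}$, and $s = w0$ or $s = w1$, giving the first type.

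Next I would treat the case where the last entry of $s$ is a $\w$. Since $s$ has exactly two $\w$'s and position $R$ is the rightmost slot, this $\w$ must be the second (rightmost) one. The defining property of $W_R$ then forces every non-$\w$ entry in positions $1,\dots,R-1$ to be $0$, while there are no entries after position $R$. Thus $s$ ends in a $\w$ and all of its non-$\w$ entries are zeros, which is precisely the second type.

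The argument is short, and the only point requiring any care is the inheritance of the ``all bits before the second $\w$ are zero'' condition under deletion of a trailing bit; this is immediate once one observes that the deletion changes neither the location of the second $\w$ nor any entry to its left. As a consistency check, the two types are disjoint and together account for $2|W_{R-1}| + (R-1) = 2(2^{R-1}-R) + (R-1) = 2^R - R - 1 = |W_R|$ sequences, matching Proposition~\ref{prop:wildcard}; so the decomposition is in fact a bijection, which is what makes it useful for an inductive treatment of $W_R$ in the sequel.
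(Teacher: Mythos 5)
Your proof is correct, but it runs in the opposite direction from the paper's. You argue directly that every $s\in W_R$ falls into one of the two types, by a case split on the last symbol: if it is a bit, deleting it leaves both $\w$'s and all entries to the left of the second $\w$ untouched, so the prefix inherits membership in $W_{R-1}$; if it is a $\w$, it is necessarily the second $\w$, forcing all bits to its left to be zero. The paper instead proves only the easy containment --- that every sequence of either described type lies in $W_R$ --- and then closes the argument by counting: the two types contribute $2|W_{R-1}|$ and $R-1$ sequences respectively, and $2|W_{R-1}|+(R-1)=2^R-R-1=|W_R|$ by Proposition~\ref{prop:wildcard}, so the containment must be an equality. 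Your route has the advantage of being self-contained (the cardinality check you append is genuinely only a consistency check, not a load-bearing step), and it makes explicit the one point the paper leaves implicit, namely why the ``all bits before the second $\w$ are zero'' condition survives truncation. The paper's route is shorter given that Proposition~\ref{prop:wildcard} is already in hand, but it does silently rely on the two types being disjoint, which you state explicitly. Either argument is acceptable; yours is arguably the more transparent of the two.
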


\begin{proof}
  It is clear that any sequence described above is in $W_R$.  Then we
  simply need to count all of the entries described above.  There are
  $2W_{R-1}$ of the first type of sequence, and $R-1$ of the second
  type.  Thus we have obtained $2W_{R-1} + R-1$ sequences of length
  $R$, and this the same recursion relation obtained by the sequence
  $|W_R|_{R=3,\dots}$.
\end{proof}

\begin{thm}\label{thm:wildcard}
  If $\Gamma$ is a weighted graph with $R$ negative edges, then a
  necessary and sufficient condition for $Z(\G)$ to decompose as a
  union of hyperplanes is that $\Delta_w=0$ for all $w \in W_R.$
\end{thm}

\begin{proof}
  The necessary direction of this statement has been proved already in
  Proposition~\ref{prop:wildcard}, so we consider sufficiency here.
  We will use a proof by induction, with base case $R=2$; this case
  was established in Lemma~\ref{lem:hyperbola}.

  Assume that the theorem is true for $R-1$, and that $\Delta_w = 0$
  for all $w\in W_R$.  Let $b_R \subseteq 2^R$ be all of those
  sequences with a one in the last slot, and let $P_R\colon 2^R \to
  2^{R-1}$ be the projection on sequences that forgets the last slot.
  Then
\begin{equation*}
  \M(\G) = \sum_{I\in 2^R} \ssi I(\G) x^I =  x_R\sum_{I\in b_R}A_I x^{P_R(I)}+\sum_{I\not\in b_R}A_I x^{P_R(I)}
\end{equation*}
We write
\begin{equation}\label{eq:defoffg}
  f_R(x) = \sum_{I\in b_R}A_I x^{P_R(I)},\quad g_R(x) = \sum_{I\not\in b_R}A_I x^{P_R(I)},
\end{equation}
giving 
\begin{equation*}
  \M(\G) = x_R f_R(x) + g_R(x).
\end{equation*}
Note that $f_R(x), g_R(x)$ are functions only of $x_1,\dots,x_{R-1}$.

We can write $f_R(x)$ as in~\eqref{eq:defoffg}, but we can also write
this as
\begin{equation*}
  f_R(x) = \sum_{I\in 2^{R-1}} A_{I1}x^I.
\end{equation*}
Note then that the condition that $f_R(x)$ fully factor is then that
$\Delta_{w1} = 0$ for all $w\in W_{R-1}$, but by
Lemma~\ref{lem:wcrecursion}, $w1\in W_R$ for all $w\in W_{R-1}$.  The
argument for $g_R(x)$ is similar using $w0$ for $w\in W_{R-1}$.
Therefore, $f_R(x)$ and $g_R(x)$ fully factor into $R-1$ linear terms.

Now, write $v,w$ as the vector of coefficients of $f_R,g_R$,
respectively.  Each of these vectors are of length $2^{R-1}$.  We have
shown that each corresponds to a factorizing polynomial, and we needed
$|W_{R-1}| = 2^{R-1}-R$ conditions to establish this.  Thus $v,w$ each
only have $R$ degrees of freedom.  Therefore, exactly $R-1$ additional
conditions guarantee that $v,w$ are linearly dependent.  Note that
there are $R-1$ sequences of the second type in
Lemma~\ref{lem:wcrecursion}, and moreover, notice that each of them
involve a pair of coefficients that does not appear in any of the
others, e.g. the sequence with its first $\w$ in the $i$th slot
involves the coefficients of $x_ix_R, x_i$.  Thus they are all
independent, and these conditions are sufficient to guarantee the
collinearity of $v$ and $w$.

We have shown that $f_R(x), g_R(x)$ each fully factor into $R-1$
linear terms, and that they are scalar multiples of each other (say
$g_R(x) = \beta f_R(x)$).  Then we can write
\begin{equation*}
  \M(\G) = (x_R+\b) f_R(x),
\end{equation*}
and this is clearly a product of $R$ linear factors.
\end{proof}

Finally we note that the arguments of the previous section tell us how
to interpret $\tilde b$ as a signed count of spanning forests in a
certain derived graph, and this is proved the same way as
Theorem~\ref{thm:2-forest}.

\begin{prop}
Let $\Gamma_{\tilde b}$ be the (multi)graph derived from $\Gamma$ in the 
following way: for each $1$ digit of $\tilde b$ the corresponding edge of 
$\Gamma$ is contracted, while for each $0$ digit of $\tilde b$ the 
corresponding edge of $\Gamma$ is deleted.  Then
\[
\Delta_{\tilde b} =  \left(\sum _{F \in F^{(2)}}(-1)^{n(F)} \right)^2
\]
\end{prop}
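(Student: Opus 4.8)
The statement asserts that each individual discriminant $\Delta_{\tilde b}$ equals minus the square of a signed count of spanning $2$-forests in the derived (multi)graph $\Gamma_{\tilde b}$. The whole point of the final sentence in the excerpt is that this is \emph{the same result} as Theorem~\ref{thm:2-forest}, merely applied to a reduced graph. So the plan is to reduce $\Delta_{\tilde b}$ to a $2$-variable discriminant of the type analyzed in Section~\ref{sec:disc2} and then invoke Theorem~\ref{thm:2-forest} verbatim on $\Gamma_{\tilde b}$.

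Let me sketch the reduction. Recall a wildcard $\tilde b$ has its two $\w$'s in positions $i<j$, with a fixed binary string elsewhere; let $b$ be the sequence obtained by setting both $\w$'s to $0$. By definition $P_{\tilde b}(x) = A_b + A_{b+e_i}x_i + A_{b+e_j}x_j + A_{b+e_i+e_j}x_ix_j$, and $\Delta_{\tilde b} = A_b A_{b+e_i+e_j} - A_{b+e_i}A_{b+e_j}$. The key observation is that each coefficient $A_c = \ssi c(\Gamma) = \M(\Gamma_c)$ by~\eqref{eq:ssi}, where $\Gamma_c$ is obtained by contracting the red edges indexed by the $1$'s of $c$ and deleting those indexed by the $0$'s. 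Since the four sequences $\{b, b+e_i, b+e_j, b+e_i+e_j\}$ agree in every slot except $i,j$, the corresponding four graphs differ only in how they treat red edges $i$ and $j$: they all start from the common graph $\Gamma_{\tilde b}$ (contract the $1$-slots of $\tilde b$, delete the $0$-slots), and then within $\Gamma_{\tilde b}$ one either deletes or contracts the two distinguished edges. Concretely, $A_b = \M((\Gamma_{\tilde b})_{\{i,j\}})$ (both deleted), $A_{b+e_i+e_j} = \M((\Gamma_{\tilde b})^{\{i,j\}})$ (both contracted), and the mixed terms delete one and contract the other.

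Once this identification is made, $\Delta_{\tilde b}$ is \emph{literally} the two-variable discriminant $A_{11}A_{00} - A_{10}A_{01}$ of the crossing polynomial of the graph $\Gamma_{\tilde b}$ \emph{viewed as a graph with exactly two red edges} (the edges $i$ and $j$), all other red edges having been frozen by contraction or deletion. Theorem~\ref{thm:2-forest} then applies directly to $\Gamma_{\tilde b}$ and gives
\begin{equation*}
  \Delta_{\tilde b} = -\left(\sum_{F\in\ft U W}\epsilon(F)\pi(F)\right)^2,
\end{equation*}
where $U,W$ are the vertex pairs of the two surviving red edges in $\Gamma_{\tilde b}$. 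Writing $n(F)$ for the parity data encoding $\epsilon(F)$ (i.e.\ $\epsilon(F) = (-1)^{n(F)}$) recovers the stated formula, with the sign absorbed into the squaring convention of the proposition.

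\textbf{The main obstacle.} The only real subtlety is checking that contraction and deletion of the \emph{frozen} red edges commute correctly with the deletion/contraction of edges $i$ and $j$, so that all four coefficients $A_c$ genuinely arise as crossing polynomials of a \emph{single} ambient graph $\Gamma_{\tilde b}$ differing only in the treatment of two edges. This is a routine but necessary verification that deletion/contraction operations on disjoint edge sets commute (standard, e.g.\ \cite[\S13.2]{Godsil.Royle}), together with the observation that contraction may produce a multigraph — hence the parenthetical ``(multi)graph'' in the statement — so one must confirm that Theorem~\ref{thm:2-forest} and the Chaiken/Dodgson machinery it rests on are insensitive to multiple edges (they are, since they operate on the Laplacian, which simply sums parallel edge weights as in Definition~\ref{def:dc}). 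I expect no genuine difficulty beyond this bookkeeping: the content is entirely inherited from the $R=2$ analysis.
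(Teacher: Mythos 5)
Your proposal is correct and matches the paper's intended argument exactly: the paper offers no written proof beyond the remark that the proposition ``is proved the same way as Theorem~\ref{thm:2-forest}'', i.e.\ by applying the two-red-edge discriminant analysis to the derived graph $\Gamma_{\tilde b}$, which is precisely the reduction you carry out via the identification $A_c = \M(\Gamma_c)$ and the commutativity of deletion/contraction on disjoint edge sets. Your side remark about the sign (the proposition as printed omits the minus sign appearing in~\eqref{eq:Delta1}) is also apt --- that is a discrepancy in the paper's statement, not a gap in your argument.
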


\section{Stability Estimates}
 
Here we prove a sufficient condition for stability of a Laplacian in
terms of the ``worst edge''.

\begin{thm}
  Let $\G$ be a weighted graph with fixed positive weights with $\G_+$
  connected and $R$ red edges.  For each red edge $e_i$, let $\Gamma_i
  = \Gamma_{\{i\}}$ be as defined as in Definition~\ref{def:SIJ}.
  Define
  \begin{equation*}
    \omega_i = \frac{\M(\G_i)}{\M(\G_+)}.
  \end{equation*}
  Then, for any ${\bf t} = (t_1,t_2,\dots, t_R)$ with 
  \begin{equation*}
    \norm{{\bf t}}_{\ell^1} \le \min_i \omega_i,
  \end{equation*}
  define $G_{\mathbf t}$ as the graph where we associate weight $-t_i$
  to edge $e_i$ has spectral index.  Then the spectral index of
  $\mathcal{L}(G_{\mathbf t})$ is $(n_-,n_0,n_+) = (N-1,1,0)$.
\end{thm}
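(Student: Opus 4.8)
The plan is to realize $\L(G_{\mathbf t})$ as a fixed negative-definite base operator plus a small positive-semidefinite perturbation, and to show the perturbation is too weak to push any eigenvalue up to zero. Writing $b_i=\delta_a-\delta_b\in\R^N$ for the signed incidence vector of the red edge $e_i=(a,b)$, a direct check against the convention~\eqref{eq:defofL} gives
\begin{equation*}
  \L(G_{\mathbf t}) = \L(\G_+) + \sum_{i=1}^R t_i\, b_i b_i^{\top}.
\end{equation*}
Since $\G_+$ is connected with positive weights, $\L(\G_+)$ is negative semidefinite with kernel exactly $\spanop{\1}$; set $K=-\L(\G_+)$, so that $K\succ 0$ on $\1^{\perp}$. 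Each $b_i\perp\1$, so $\1$ stays in the kernel and $n_0\ge 1$ automatically. The theorem therefore reduces to showing that $\L(G_{\mathbf t})$ is negative definite on $\1^{\perp}$, i.e.\ that $K-\sum_i t_i b_i b_i^{\top}\succ 0$ there; this rules out both positive and extra zero eigenvalues and pins the index at $(N-1,1,0)$.

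Next I would diagonalize by congruence. On $\1^{\perp}$, where $K$ is invertible, the condition $K-\sum_i t_i b_i b_i^{\top}\succ 0$ is equivalent (conjugating by $K^{-1/2}$) to $\lambda_{\max}\!\big(\sum_i t_i c_i c_i^{\top}\big)<1$ with $c_i=K^{-1/2}b_i$. The Rayleigh-quotient estimate
\begin{equation*}
  \lambda_{\max}\Big(\sum_i t_i c_i c_i^{\top}\Big)
  = \max_{\norm{u}=1}\sum_i t_i \ip{c_i}{u}^2
  \le \sum_i t_i \norm{c_i}^2
  = \sum_i t_i\,\big(b_i^{\top}K^{-1}b_i\big)
\end{equation*}
then reduces the whole problem to showing $\sum_i t_i\,(b_i^{\top}K^{-1}b_i)<1$ under the hypothesis.

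The combinatorial core is to identify $b_i^{\top}K^{-1}b_i$ with $\omega_i$: this number is the effective resistance of $e_i$ across $\G_+$. By~\eqref{eq:ssi} we have $\M(\G_i)=\ssi{\{i\}}(\Gamma)$, which the weighted matrix-tree theorem (Lemma~\ref{lem:mtt}) evaluates as the black-weighted count of spanning $2$-forests of $\G_+$ separating the two endpoints of $e_i$, while $\M(\G_+)$ counts the spanning trees of $\G_+$; their ratio is precisely $b_i^{\top}K^{-1}b_i=\M(\G_i)/\M(\G_+)=\omega_i$. (This is the $\av U=\av W=1$ instance of Chaiken's all-minors theorem quoted above.) Substituting, stability is guaranteed once $\sum_i t_i\,\omega_i<1$, and since $\sum_i t_i\,\omega_i\le \norm{\mathbf t}_{\ell^1}\max_i\omega_i$, the controlling scale is the largest (``worst'') effective resistance $\max_i\omega_i$: an $\ell^1$ ball whose radius is its reciprocal forces $\sum_i t_i\omega_i<1$, whence $K-\sum_i t_i b_i b_i^{\top}\succ 0$ on $\1^{\perp}$ and the index is $(N-1,1,0)$.

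The step I expect to be the main obstacle is twofold. First is pinning down the effective-resistance identity with the correct normalization and orientation, since the incidence-vector and Laplacian sign conventions here are the negatives of the standard ones and the matrix-tree bookkeeping must be matched to the definitions of $\M$ and of $\G_i$; this is exactly where the deletion/contraction identity~\eqref{eq:ssi} and Chaiken's theorem do the real work. Second is the boundary behaviour of the non-strict inequality: the crude estimate above is an equality only when all active $c_i$ are collinear, which for $R\ge 2$ distinct red edges cannot occur, so $\lambda_{\max}<1$ remains strict on the boundary of the $\ell^1$ ball and the kernel stays one-dimensional; the single-edge case must be checked directly, since there equality corresponds precisely to the first eigenvalue crossing and marks the sharpness of the bound.
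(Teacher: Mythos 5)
Your proof is correct in substance, but it takes a genuinely different route from the paper's. The paper argues by convexity: $-\L(G_{\mathbf t})=-\L(\G_+)-\sum_i t_i b_i b_i^{\top}$ is affine in $\mathbf t$, the positive-semidefinite cone is convex, the origin and the single-axis points up to the first crossing are stable (the $R=1$ crossing-polynomial analysis), and the stated $\ell^1$ simplex is contained in the convex hull of those points --- no spectral estimate and no minor-determinant machinery are needed. You instead prove a quantitative perturbation bound: congruence by $K^{-1/2}$ on $\1^{\perp}$, the trace bound on $\lambda_{\max}\bigl(\sum_i t_i c_i c_i^{\top}\bigr)$, and the identification of $\norm{c_i}^2=b_i^{\top}K^{+}b_i$ with the effective resistance $\M(\G_i)/\M(\G_+)$ via the all-minors matrix-tree theorem. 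What your route buys is the explicitly weighted sufficient condition $\sum_i t_i\,R_{\mathrm{eff}}(e_i)<1$, which contains the unweighted $\ell^1$ ball of the statement and comes with an electrical interpretation of the thresholds; what it costs is the Chaiken/Dodgson bookkeeping that the paper's two-line convexity argument avoids. Two points of bookkeeping deserve note. First, your chain of inequalities establishes stability for $\norm{\mathbf t}_{\ell^1}<\min_i \M(\G_+)/\M(\G_i)$, i.e.\ the reciprocal of the $\omega_i$ as literally defined in the statement; this reciprocal is the axis crossing location and is what the paper's own proof implicitly uses (the fraction in the definition of $\omega_i$ appears to be inverted), so you have proved the intended theorem. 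Second, you are right to flag the boundary: at the simplex vertex $\mathbf t$ lying on the worst axis at its crossing value, the crossing polynomial vanishes and $n_0=2$, so the conclusion with non-strict inequality fails exactly there (a gap shared by the paper's proof); your observation that equality in the Rayleigh bound forces a single active, aligned direction localizes the failure to precisely that vertex.
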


\begin{proof}
  For each $i$, let $e_i$ be the standard basis vector with a $1$ in
  slot $i$.  If ${\mathbf t} = \alpha e_i$, then for all $\alpha <
  t_i$, $\mathbf{L}(G_{\mathbf t})$ is negative semidefinite with
  $N-1$ negative eigenvalues.  More generally, if $\norm{{\bf
      t}}_{\ell^1} \le \min_i \omega_i$, then ${\bf t}$ is a convex
  linear combination of the basis vectors described above, and is thus
  a convex linear combination of positive semidefinite operators, and
  thus is itself positive semidefinite.  Moreover, if we restrict to
  the subspace ${\bf 1}^\perp$, then the previous sentence is still
  true after replacing ``positive semidefinite'' with ``positive''.
\end{proof}

\begin{remark}
  Basically, this just tells us that if we can rank the red edges from
  ``best'' to ``worst'', then as long as we would not lose stability
  by putting all of the weight on the worst edge, then we can
  redistribute the same amount of negative weights however we choose
  and still retain stability.
\end{remark}

\section{Numerical Results}

\newcommand{\gp}{G_{\mathsf{P}}} \newcommand{\gm}{G_{\mathsf{M}}}

\begin{figure}[ht]
\begin{centering}
  \includegraphics[width=0.9\textwidth]{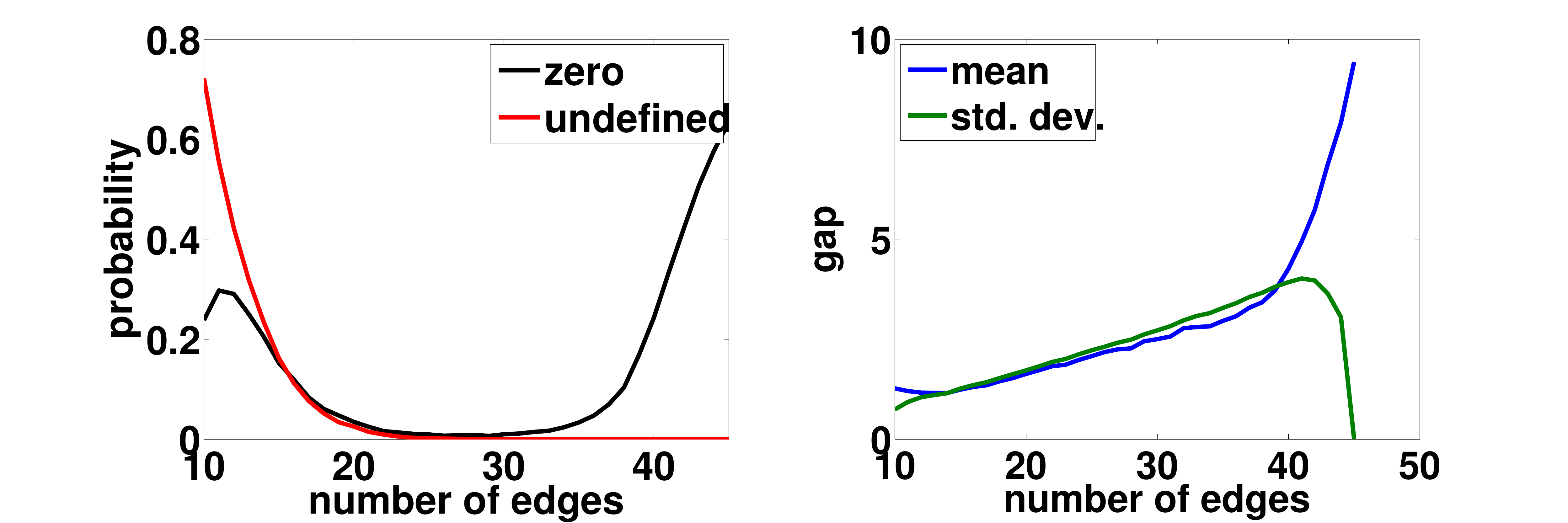}
  \includegraphics[width=0.9\textwidth]{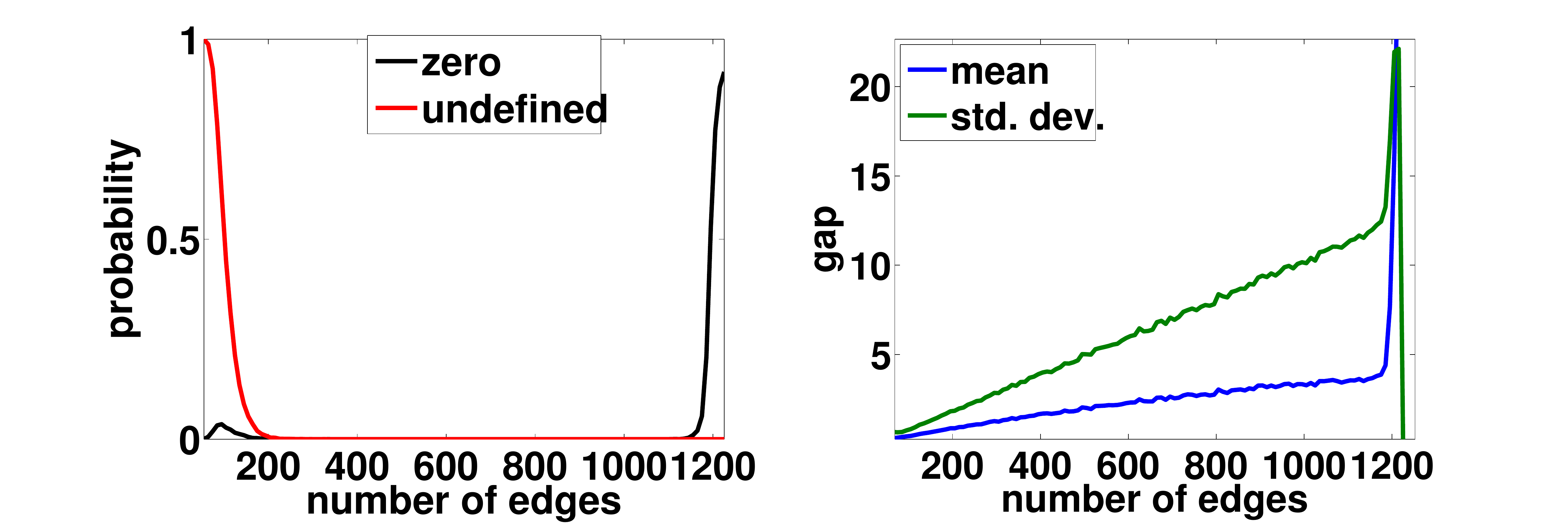}
  \caption{This figure contains several observables for $\gm(N,M)$
    versus $M$, where we choose $N=10$ in the top row and $N=50$ in
    the bottom.  In the left figure, we plot both the probability that
    $\Delta = 0$ (black) and the probability that $\G_+$ is not
    connected (red).  On the right, we plot the mean and standard
    deviation of the gap conditioned on $\G_+$ being connected and
    $\Delta \neq 0$.}
  \label{fig:gm1}
\end{centering}
\end{figure}

\begin{figure}[ht]
\begin{centering}
  \includegraphics[width=0.9\textwidth]{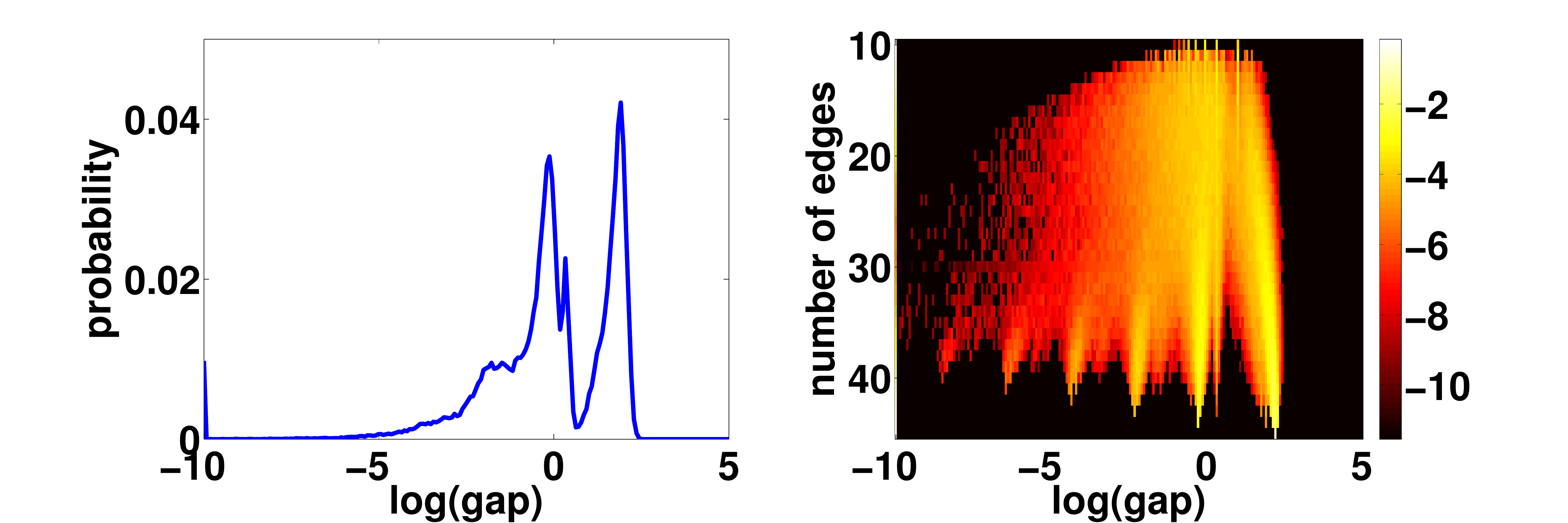}
  \includegraphics[width=0.9\textwidth]{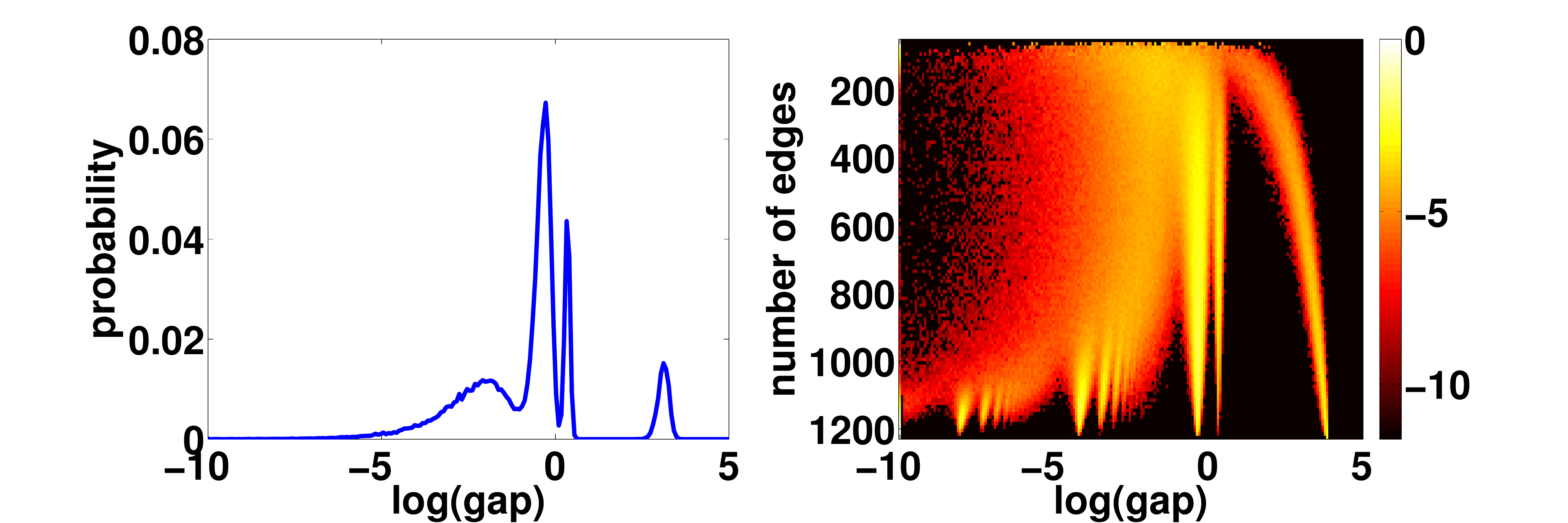}
  \caption{ Histograms of the size of the gap for the $\gm(N,M)$
    model.  }
  \label{fig:gm2}
\end{centering}
\end{figure}

\begin{figure}[ht]
\begin{centering}
  \includegraphics[width=1.0\textwidth]{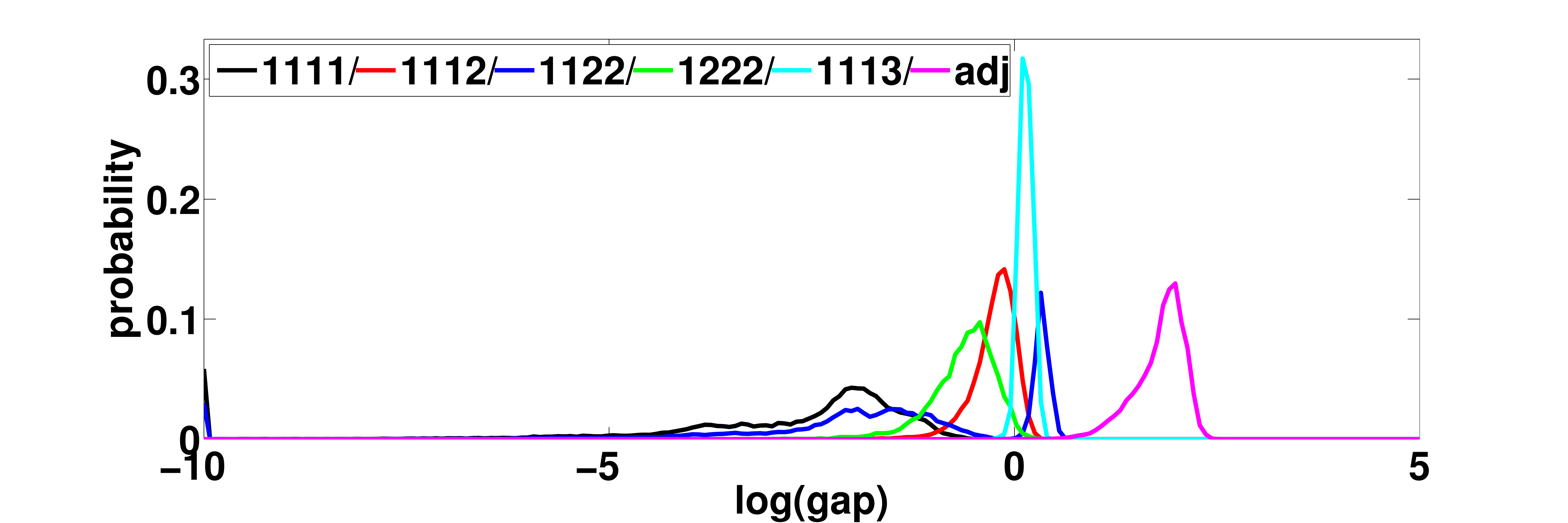}
  \caption{This figure is an overlay of several conditional
    distributions for $N=10,M=30$, with specifics described in the
    text.  In particular, the histogram in the top left frame of
    Figure~\ref{fig:gm2} is a convex linear combination of the various
    histograms presented here.}
  \label{fig:conditional}
\end{centering}
\end{figure}

In this section we present a collection of numerical results for
random matrices.  We are particular interested the probability
distribution of the nondimensionalized ``gap'' of a random matrix with
two red edges.  In all of the simulations performed, we have used the
Erd\H{o}s--R\'{e}nyi random graphs $\gm(N,M)$, the uniform random
graph with $N$ vertices and $M$ edges.  (The specific definition for
the distribution on these graphs is as follows.  Fix $N$, order the
$N(N-1)/2$ edges in some manner, and then choose $M$ of them without
replacement.)  To make this a signed graph, we then uniformly choose
two of these edges to be red.  (We also performed all of the simulations
presented here for the graph model $\gp(N,p)$ (in this case, one
chooses each edge to be present independently with probability $p$)
and the results are almost indistinguishable.  In the interests of
space we do not present these here.)

We see a variety of interesting behaviors in these numerics.  First,
we plot several coarse observables of this ensemble in
Figure~\ref{fig:gm1}.  The first quantity of interest is the
probability that the gap is zero, and the probability that it is
undefined.  Recall that the gap will be undefined whenever the graph
is disconnected, since in this case $A_{11}$ is zero
(q.v.~\eqref{eq:Euc2}).  Of course, this is more likely for a small
number of edges, and we see these curves decreasing monotonically.
Interestingly enough, the probability of a zero gap is actually not
monotone, but in fact turns around: it is most likely when there are
very few edges and when there are many edges.  In fact, we see that
for sparse graphs, the vast majority of the connected graphs have zero
gaps.  In all of these pictures, the number of edges increases until
we reach the complete graph; as proved in the text, the event of
having a zero gap for the complete graph is the same as having the two
red edges not share a vertex (q.v.~Example~\ref{exa:complete}).  A
simple combinatorial argument shows that this probability is $1-4/N$,
giving $0.6, 0.92$ in the two graphs plotted in Figure~\ref{fig:gm1}.

In Figure~\ref{fig:gm2}, we plot the histograms of the gap in a
variety of ways.  As in Figure~\ref{fig:gm1}, the top row corresponds
to $N=10$ and the bottom row to $N=50$.  In the left column, we have
plotted the histogram for a single value of $M$; in the top left, we
have $N=10, M=30$ and in the bottom left we have $N=50, M=605$.  In
the right column, we plot a heatmap of the histograms for various $M$;
in the top right, we plot all $M$ in the range $[10,45]$ and in the
bottom right, we have $M$ going from $55$ to $1225$ by jumps of $10$.
Note that in each row, the left frame is a horizontal ``slice'' of the
right frame.  Each ``slice'' in the right figure contains $10^5$
realizations of random graphs; for the specific histograms in the left
column we simulated $10^7$ random graphs.

What is most striking about this figure is the lack of normality of
the distribution, yet, at the same time, the clear impression that
they are multimode normal.  Moreover, the heatmaps certainly suggest
that these modes are coherent as a function of $M$.  Again, we know
that the distribution for the complete graph is degenerate in that it
can take on only two values: zero (if the edges are disjoint) and
$N^{2N-6}$ (if the edges share a vertex).  Since we are plotting the
logarithm of the gap, the zero is represented by a mass at $-\infty$
(which is here binned at $-10$).  So we see that only one nonzero mode
``survives'' as $M$ goes to the complete graph, and this is the one
corresponding to adjacent edges.

It is natural to ask what these modes correspond to, and we have
decomposed the distribution in Figure~\ref{fig:conditional}.  We have
done this for the case $N=10, M=30$, so that this is a decomposition
of the histogram in the top left frame of Figure~\ref{fig:gm2}.  What
we have done is as follows: for all of the random graphs generated for
$N=10, M=30$, we condition these in various ways.  Those where the red
edges are adjacent we put in class ``adj''.  For those in which the
red graphs were not, we computed four pairwise path distances between
the vertices of the two red edges.  To be more specific, if the two
red edges were $x = (x_1,x_2)$ and $y=(y_1,y_2)$, we computed
$d_{G_+}(x_i,y_j)$, $i,j=1,2$ --- that is to say, we computed the
graph distance between the nodes using only black edges.  If, for
example, all of these vertices were adjacent in the black graph, we
put it in class ``1111''; if, for example, three of them were
adjacent, but one pair had a $G_+$ distance of $2$, we put it in class
``1112'', etc.  We then plotted the conditional distributions for each
class, and we see the conditional distributions are close to
log-normal (recall that the horizontal axis is always the logarithm of
the gap).  

In summary, while the distribution of all graphs with a fixed number
of edges is multimodal, if we consider those graphs with a fixed
number of edges $M$, and two red edges, and we condition on the
geometry of these red edges, then the conditional distributions are
each quite close to log-normal.

\section*{Acknowledgments}

J.B. was supported by the National Science Foundation under grant
DMS-1211364.  L.D. was supported by the National Science Foundation
under grants CMG-0934491 and UBM-1129198 and by the National
Aeronautics and Space Administration under grant NASA-NNA13AA91A.

\end{document}